\documentclass[11pt]{article}

\usepackage[T1]{fontenc}
\usepackage[utf8]{inputenc}
\usepackage{lmodern}
\usepackage{amsmath,amssymb,amsthm}
\usepackage{graphicx}
\usepackage{float}
\usepackage{hyperref}
\usepackage[margin=1in]{geometry}

\hypersetup{
  colorlinks=true,
  linkcolor=blue,
  citecolor=blue,
  urlcolor=blue
}

\newtheorem{theorem}{Theorem}[section]
\newtheorem{proposition}[theorem]{Proposition}
\newtheorem{lemma}[theorem]{Lemma}

\newtheorem{question}[theorem]{Question}
\theoremstyle{definition}
\newtheorem{definition}[theorem]{Definition}
\newtheorem{example}[theorem]{Example}
\theoremstyle{remark}
\newtheorem{remark}[theorem]{Remark}

\title{Quantitative Merging for Time Inhomogeneous Markov Chains in Non-Decreasing Environments via Functional Inequalities}
\author{Nordine Anis Moumeni\\
Aix-Marseille Universit\'e, CNRS, Centrale Marseille, I2M, UMR 7373, Marseille, France\\
\texttt{nordine.moumeni@univ-amu.fr}}
\date{}

\begin{document}

\maketitle

\begin{abstract}
We study time-inhomogeneous Markov chains to obtain quantitative results on their asymptotic behavior. We use Poincar\'e, Nash, and logarithmic-Sobolev inequalities. We assume that our Markov chain admits a finite invariant measure at each time and that the sequence of these invariant measures is non-decreasing. We deduce quantitative bounds on the merging time of the distributions for the chain started at two arbitrary points and we illustrate these new results with examples.
\end{abstract}

\noindent\textbf{Keywords.} Merging; Markov chains; Time-inhomogeneous; Functional inequalities; Non-decreasing environments.

\noindent\textbf{Mathematics Subject Classification.} NA.

\section{Introduction}\label{Introdu}
\subsection{Motivation and Background}\label{Motiv and Back}
In this article, we are interested in studying quantitatively discrete time, time-inhomogeneous Markov chains. The problem of obtaining accurate estimates for the time to reach equilibrium for a Markov chain within a time-homogeneous context is classical and has been extensively studied in the literature, see \cite{SF}. On the other hand, adapting standard techniques from the time-homogenous to the time-inhomogenous context turns out to be far from straightforward, and indeed time inhomogeneity may produce unexpected behaviors, as shown in \cite{SZ0}, \cite{SZ4} and \cite{H}.

First note that, unlike a time-homogeneous and aperiodic Markov chain, a time-inhomogenous Markov chain may not converge in law as time tends to infinity. For time-homogenous Markov chains, studying mixing properties or the convergence to equilibrium means investigating the rate of convergence of the laws towards a reference target measure. In the time-inhomogeneous context, there is generally no time-independent reference measure. However, it still makes sense to ask how long one should wait until the law at time $t$ does not depend much on the initial state. Following \cite{SZ0} and \cite{SZ3}, this observation leads us to the definition of "merging times".

Let $V$ be a finite or infinite countable set and $(K_t)_{t\in \mathbb{N}}$ a family of Markov transition operators defined on $V$. Let $\mu_0$ be a probability measure on $V$. We denote with $(\Omega\,,\,\mathcal{G}\,,\,\mathbb{P})$ a probability space and such that $(X_t^{\mu_0})_{t\in \mathbb{N}}$ is a $V$-valued discrete-time Markov chain driven by $(K_t)_{t\in \mathbb{N}}$ with initial law $\mu_0$ defined by:
$$
\mathcal{L}(X_0^{\mu_0})=\mu_0  \quad\text{and}\quad\forall t\ge 0\,,\,\forall z \in V\,,\quad \mathbb{P}(X_{t+1}^{\mu_0}= z\mid  X_t^{\mu_0})= K_{t+1}(X_t^{\mu_0},z)
\, .$$
For $t\geq 0$, let $\mu_t^{\mu_0}$ be the law of $X_t^{\mu_0}$. When $\mu_0$ is the Dirac mass at point $x$, we use the short hand notation $\mu_t^x$ instead of $\mu_t^{\mu_0}$. Furthermore, we denote for $0\leq s< t$,
$$K_{s,t}:=K_{s+1}...K_{t}\quad\text{and}\quad  K_{0,t}=K_1...K_t\,.$$

Merging corresponds to the property of forgetting the initial law of the Markov chain. To quantify this property, we control the distance between the distributions after $t$ steps of two chains driven by the same Markov transition operators and started at two distinct initial conditions. We work with the total variation distance and when $V$ is finite, it is interesting to consider the separation distance. We recall that these distances are defined as:
\begin{equation*}
    \forall (\mu,\nu)\,,\quad \, d_{TV}(\mu,\nu)= \frac{1}{2}\sum_{x\in V}|\mu(x)-\nu(x)|\quad \text{and}\quad s( \mu, \nu ) = \max_{x\in V}\left\{ 1- \frac{\mu(x)}{\nu(x)} \right\}\,.
\end{equation*}
Therefore, we aim to find bounds as precise as possible on the following quantities for $(x,y)\in V$ and $t\geq 1$:
$$d_{TV}(\mu_t^x,\mu_t^y) \quad \text{or} \quad s(\mu_t^x,\mu_t^y)\,.  $$

Imitating \cite{SZ2}, we define the following quantities:
\begin{definition}
Let $\eta\in (0,1)$ and $x,y$ in $V$. We define the $\eta$-merging time related to $x,y$ and the relative-sup $\eta$-merging time related to $x,y$ respectively by:
\begin{equation*}
	T_{\text{mer}}(x,y,\eta) = \inf\left\{ t \ge 1\mid d_{TV}(\mu_t^{x},\mu_t^{y}) \le \eta \right\}\quad \text{  and }\quad 
		T_{\text{mer}}^{\infty}(x,y,\eta) = \inf\left\{ t \ge 1\mid s(\mu_t^{x}\mid \mu_t^{y}) \le \eta\right\}\, .
\end{equation*}
\noindent
We may want to bound the merging uniformly over $x,y$. Hence, we consider the $\eta$-merging time and the relative-sup $\eta$-merging time defined by: 
\begin{equation*}
    T_{\text{mer}}(\eta) = \max\left\{ T_{\text{mer}}(x,y,\eta)\mid (x,y)\in V\right\}\quad \text{and} \quad 
    T_{\text{mer}}^{\infty}(\eta) = \max \left\{T_{\text{mer}}^{\infty}(x,y,\eta) \mid (x,y)\in V\right\}\,.
\end{equation*}
\end{definition}

Note that the merging time coincides with the classical mixing time in the time-homogeneous context up to a constant.

In order to quantify merging, we shall rely on several functional inequalities. We start discussing Poincar\'e inequalities. First, given a Markov transition operator $K$ with stationary probability $\Tilde{\pi}$, we define the following operators:
\begin{itemize}
	\item $K^*$ is the adjoint operator of $K$ from $\ell^2\left(\Tilde{\pi}\right)$ to $\ell^2\left(\Tilde{\pi}\right)$,
	\item $Q:=K^*K$ from $\ell^2\left(\Tilde{\pi}\right)$ to $\ell^2\left(\Tilde{\pi}\right)$ the multiplicative symmetrisation.
\end{itemize}
We recall the definition of the variance linked to $\Tilde{\pi}$ and the Dirichlet form linked to the pair $(Q,\Tilde{\pi})$:
$$
\begin{aligned}
\text { for all } f: V \rightarrow \mathbb{R}\,,\quad \operatorname{Var}_{\Tilde{\pi}} (f ) &= \sum_{x\in V} f^2(x)  \Tilde{\pi}(x) -\left(\sum_{x\in V} f(x)  \Tilde{\pi}(x)\right)^2 \quad \text{ and } \\
\mathcal{E}_{Q,\Tilde{\pi}}(f, f) = \sum_{x \in V}\sum_{y\in V} &(f(x)-f(y))^2 \Tilde{\pi}(x)Q(x,y)\,.
\end{aligned}
$$

The Poincar\'e constant of $Q$ is defined as follows:
\begin{definition}
Let $\gamma(Q)\geq 0$  be the optimal constant in the inequality:
$$ \quad \text { for all } f: V \rightarrow \mathbb{R} \,,\quad  \gamma \operatorname{Var}_{\Tilde{\pi}} (f )\leq \mathcal{E}_{Q,\Tilde{\pi}}(f, f)\,. $$
\end{definition}
\noindent Since $Q$ is reversible, note that $1-\gamma(Q)$ is an eigenvalue of $Q$.\\
It is then immediate to estimate the merging of a time-homogeneous Markov chain using this constant, see Corollary 2.1.5 in \cite{SF}:
\begin{theorem}\label{homogeneP}
Let $K$ be an aperiodic and irreducible Markov transition operator with invariant probability $\Tilde{\pi}$. Let $Q$ be the multiplicative symmetrisation of $K$ from $\ell^2\left(\Tilde{\pi}\right)$ to $\ell^2\left(\Tilde{\pi}\right)$ and $\gamma$ be $\gamma(Q)$, then for all $(x,y) \in V$,
$$d_{TV}\left(\mu_t^x, \mu_t^y\right) \leq \frac{1}{2}\left(\frac{1}{\sqrt{\Tilde{\pi}(x)}}+ \frac{1}{\sqrt{\Tilde{\pi}(y)}} \right)(1-\gamma)^{t/2} . $$
Therefore,
$$ \forall \eta \in (0,1)\,,\quad  T_{\text{mer}}(x,y,\eta) \leq \frac{2}{\gamma}\left(\log(1/\eta)+\log\left( \frac{1}{\sqrt{\Tilde{\pi}(x)}}+ \frac{1}{\sqrt{\Tilde{\pi}(y)}}\right)\right)\, .$$
\end{theorem}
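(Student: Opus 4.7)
The plan is to dominate the total-variation distance by an $\ell^2(\Tilde{\pi})$ inner product and exploit the $L^2$-contraction encoded in the Poincar\'e inequality, followed by a triangle inequality against $\Tilde{\pi}$. By the dual characterization of total variation,
\[
d_{TV}(\mu_t^x,\Tilde{\pi}) = \tfrac{1}{2}\sup_{\|h\|_\infty\le 1}\bigl|\mu_t^x(h)-\Tilde{\pi}(h)\bigr|.
\]
For such an $h$, set $g := h - \Tilde{\pi}(h)$, so that $g$ is $\Tilde{\pi}$-centered, satisfies $\|g\|_{\ell^2(\Tilde{\pi})}\le 1$, and $\mu_t^x(h)-\Tilde{\pi}(h) = K^t g(x)$.

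Introduce the density $f_0^x := \delta_x/\Tilde{\pi}(x)$. A direct computation gives $K^t g(x) = \langle f_0^x, K^t g\rangle_{\ell^2(\Tilde{\pi})}$, and since $K^t g$ has $\Tilde{\pi}$-mean zero (because $\Tilde{\pi}K=\Tilde{\pi}$), one may replace $f_0^x$ by $f_0^x - 1$ in this inner product. Cauchy--Schwarz then yields $|K^t g(x)| \le \|f_0^x-1\|_{\ell^2(\Tilde{\pi})}\cdot\|K^t g\|_{\ell^2(\Tilde{\pi})}$, while an elementary calculation produces $\|f_0^x-1\|_{\ell^2(\Tilde{\pi})}^2 = (1-\Tilde{\pi}(x))/\Tilde{\pi}(x) \le 1/\Tilde{\pi}(x)$.

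For the contraction, observe that $\|Kf\|_{\ell^2(\Tilde{\pi})}^2 = \langle f, K^*Kf\rangle_{\Tilde{\pi}} = \langle f, Qf\rangle_{\Tilde{\pi}}$, and since $1-\gamma$ is the largest eigenvalue of $Q$ on the orthogonal complement of the constants (the Rayleigh characterization of the Poincar\'e constant), one has $\|Kf\|_{\ell^2(\Tilde{\pi})} \le \sqrt{1-\gamma}\,\|f\|_{\ell^2(\Tilde{\pi})}$ for every mean-zero $f$. Iterating, and noting that $Kf$ remains centered, gives $\|K^t g\|_{\ell^2(\Tilde{\pi})} \le (1-\gamma)^{t/2}$. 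Assembling the bounds yields $d_{TV}(\mu_t^x,\Tilde{\pi}) \le \tfrac{1}{2}(1-\gamma)^{t/2}/\sqrt{\Tilde{\pi}(x)}$, and the announced estimate on $d_{TV}(\mu_t^x,\mu_t^y)$ follows from the triangle inequality $d_{TV}(\mu_t^x,\mu_t^y)\le d_{TV}(\mu_t^x,\Tilde{\pi})+d_{TV}(\mu_t^y,\Tilde{\pi})$. The merging-time bound is then immediate by solving $\tfrac{1}{2}(\Tilde{\pi}(x)^{-1/2}+\Tilde{\pi}(y)^{-1/2})(1-\gamma)^{t/2}\le\eta$ for $t$ and using $-\log(1-\gamma)\ge\gamma$.

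The most delicate step is the $\ell^2$-contraction, where one must convert the Poincar\'e inequality for the quadratic form $\langle f, Qf\rangle$ into a genuine operator contraction. Exploiting the identity $\|Kf\|^2 = \langle f, Qf\rangle$ directly on the forward action of $K$ sidesteps any separate analysis of the adjoint $K^*$ and of the possibly non-commuting operators $KK^*$ and $K^*K$.
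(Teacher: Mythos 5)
Your proof is correct, and it is essentially the standard $\ell^2$/spectral argument that the paper invokes by citation (Corollary 2.1.5 of \cite{SF}) and that its own proof of Theorem \ref{PoincarthmMer} specializes to in the homogeneous case: bounding total variation by an $\ell^2(\Tilde{\pi})$ pairing with the centered density $f_0^x-1$, contracting via $\|Kf\|_{\ell^2(\Tilde{\pi})}^2=\langle f,Qf\rangle_{\Tilde{\pi}}\le(1-\gamma)\|f\|_{\ell^2(\Tilde{\pi})}^2$ on mean-zero functions, and finishing with the triangle inequality against $\Tilde{\pi}$. The only cosmetic difference is that you work on the forward (test-function) side while the paper's general machinery works on the dual (density) side via $h_t^z=K_{0,t}^{\rightarrow}h_0^z$; by duality these are the same computation.
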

\noindent Note that when the Markov transition operators share the same invariant probability, Theorem \ref{homogeneP} still holds, see \cite{ThomasLuca}.

A classical example of an application of Theorem \ref{homogeneP} is as follows.\\
Consider a Markov transition operator $K^N$ on $\{0, \ldots, N\}$ such that:
\begin{itemize}
    \item $K^N(x, y) \in[1 / 4,3 / 4]$ if $|x-y| \leq 1$,
    \item  $K^N$ has an invariant probability measure $\Tilde{\pi}^N$ satisfying: \\ $\forall x \in V_N\,,\,\,1 / 4 \leq(N+1) \Tilde{\pi}^N(x) \leq 4$.
\end{itemize}
Then, a comparison argument with the symmetric nearest neighbor random walk on $\{0, \ldots, N\}$ implies that $\gamma((K^{N})^* K^N)$  is of order $1/N^2$, see Example \ref{stick}.\\
Therefore, an immediate application of Theorem \ref{homogeneP} gives that there exists $\kappa >0$ a constant independent of $N$ such that:
$$ \forall \eta \in (0,1)\,,\quad  T_{\text{mer}}^N(\eta) \leq \kappa N^2\left(\log(1/\eta)+\log(N)\right)\,.$$

We now turn to the time-inhomogeneous context. For each $t\geq 1$, let $\Tilde{\pi}_t$ denote an invariant probability measure for $K_t$ and $\gamma_t$ be the Poincar\'e constant. Assume that $\gamma>0$ is a common lower bound for all the $\gamma_t$'s. One might naively expect a similar bound as in Theorem \ref{homogeneP}. However, R. Huang provides a counter-example in \cite{H}, see the following:
\begin{theorem} Let $V_N $ be $\{0, \ldots, N\}$. There exists $Q_N:=(K^N_t)_{t\ge 1}$ a sequence of nearest-neighbor Markov transition operators on $V_N$ satisfying:
    \begin{itemize}
        \item for $t\geq 1$, $K^N_t(x, y) \in[1 / 4,3 / 4]$ if $|x-y| \leq 1$,
        \item for $t\geq 1$, $K^N_t$ has an invariant probability measure  $\Tilde{\pi}^N_t$ satisfying: \\ $1 / 4 \leq(N+1) \Tilde{\pi}^N_t(x) \leq 4,$ for all $x$ in $V_N$.
    \end{itemize}
Then, for all $t \geq 1$, the Poincar\'e constant $\gamma((K_t^{N})^* K_t^N)$ is of order $\frac{1}{N^2}$  but however,
$$\underset{ N \rightarrow +\infty}{ \liminf} \frac{\log\left(T^N_{\text{mer}}(0,N,1/2)\right)}{N} >0 \,.$$ 
\end{theorem}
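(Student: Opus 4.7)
The plan is to produce the sequence $(K^N_t)_{t\ge 1}$ as a concrete family of Metropolis-type nearest-neighbor chains whose reversible measures $\tilde{\pi}^N_t$ encode a slowly shifting one-dimensional potential. The three claims to verify are then: the structural bounds on $K^N_t$ and $\tilde{\pi}^N_t$ (immediate by construction), the uniform Poincar\'e estimate $\gamma((K^N_t)^\ast K^N_t)\asymp 1/N^2$, and the exponential lower bound on $T^N_{\mathrm{mer}}(0,N,1/2)$.

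For the construction, I would take $\tilde{\pi}^N_t(x)\propto e^{\phi_t(x)}$ with $\phi_t:V_N\to\mathbb{R}$ piecewise-linear, uniformly bounded, and of slope $O(1/N)$. This keeps $(N+1)\tilde{\pi}^N_t(x)\in [1/4,4]$ and, combined with a lazy Metropolis rule, places the nearest-neighbor transition probabilities in $[1/4,3/4]$. The essence of the example lies in the time-dependence of $\phi_t$: the minimum of $\phi_t$ should be designed so that the local drift at time $t$ keeps the chain started at $0$ on the left half of $V_N$ while keeping the chain started at $N$ on the right half, for all $t$ in a time window of length $e^{cN}$. With this in hand, the Poincar\'e bound $\gamma((K^N_t)^\ast K^N_t)\asymp 1/N^2$ follows time-by-time from the comparison argument with the symmetric nearest-neighbor walk already sketched for the homogeneous case.

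The genuine obstacle is the exponential lower bound on the merging time. My approach is to introduce a Lyapunov-type statistic $L_t$, adapted to $\phi_t$, that measures how far a trajectory sits on the ``wrong'' side of the current minimum of $\phi_t$, and then to show that for $t\le e^{cN}$, $L_t(X^0_t)\le N/4$ and $L_t(X^N_t)\ge 3N/4$ with probability at least $3/4$. Quantitatively, each step of the chain against the local drift has probability bounded away from $1/2$ by an amount of order $1/N$, so crossing a macroscopic fraction of $V_N$ forces $\Omega(N)$ net unfavorable steps, an event of probability $e^{-\Omega(N)}$ by a standard Chernoff/Hoeffding bound. The subtle and technically demanding point is calibrating the schedule of $\phi_t$: the potential must shift slowly enough that the trapped chain cannot exploit a transient flat landscape to escape, yet the sequence of minima must be rich enough to prevent any homogeneous comparison from saving the day. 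This time-inhomogeneous interplay between the drift and its reshaping is precisely what no purely spectral criterion can detect, and it is where the bulk of the work lies.
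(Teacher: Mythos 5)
This theorem is not proved in the paper: it is quoted as a counterexample from R.~Huang's work (reference \cite{H}), so there is no internal proof to compare against and your proposal must stand on its own. It does not, because of a quantitative error at the very step you identify as the heart of the matter. The constraint $1/4\le (N+1)\Tilde{\pi}^N_t(x)\le 4$ forces the total variation of your potential $\phi_t$ across $V_N$ to be $O(1)$ (at most $\log 16$), hence a slope of order $1/N$ per site, exactly as you set it up. But a nearest-neighbor walk whose per-step bias against the drift is $1/2-c/N$ crosses a distance of order $N$ with probability bounded below by a \emph{constant}, not $e^{-\Omega(N)}$: the relevant large-deviation exponent is (bias per step)$\times$(distance) $\asymp (1/N)\cdot N = O(1)$, and indeed the expected hitting time of the midpoint from $0$ for such a birth-and-death chain is $O(N^2)$. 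Your Chernoff argument implicitly treats the bias as a constant independent of $N$, which would require $\Tilde{\pi}^N_t$ to vary by a factor $e^{\Omega(N)}$ across the interval, violating the hypothesis. So a static or slowly drifting $O(1)$-amplitude potential well cannot confine either chain for more than polynomial time, and no calibration of the schedule of $\phi_t$ within this framework repairs it.

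The mechanism that actually produces exponential merging times is genuinely time-inhomogeneous and is absent from your proposal: one must arrange the sequence $(K^N_t)$ so that the \emph{composition} of consecutive kernels acts as a ratchet, i.e.\ so that a product such as $K_aK_b$ of two kernels, each reversible with respect to a measure comparable to uniform (e.g.\ interleaved sawtooth potentials of amplitude $O(1)$), has an effective equilibrium measure with a net drift of order $1$ per period. Accumulated over the interval this creates an effective potential barrier of height $\Omega(N)$ at the midpoint --- pumping the chain started at $0$ toward $0$ and the chain started at $N$ toward $N$ --- even though every single $\Tilde{\pi}^N_t$ stays within a factor $4$ of uniform and every $\gamma((K^N_t)^*K^N_t)\asymp 1/N^2$. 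This is precisely the phenomenon (the invariant measure of the inhomogeneous product being exponentially far from the invariant measures of the factors) that the theorem is designed to exhibit, and any correct proof has to construct and exploit it rather than a single confining potential.
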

This counterexample illustrates the difficulty in grasping the merging time. Indeed, in this case, we assume bounds on the spectral gaps, kernels, and invariant probabilities that are uniform both with respect to time and space. Finally, to each transition operator, one can associate a set of conductances with values between $1$ and $2$. However, despite all this and contrary to intuition, the merging time is not polynomial but exponential in $N$. In conclusion, the naive extension of Theorem \ref{homogeneP} to the time-inhomogeneous context does not work. It is therefore necessary to impose hypotheses that exclude this type of case.

One approach is with $c$-stablity. In a series of papers, Saloff-Coste and Z{\'u}{\~n}iga introduce the notion of $c$-stability for time-inhomogeneous Markov chains. Under this assumption, one controls the fluctuations of the invariant probabilities. Checking $c$-stability is a challenging task, as noted by the authors. Indeed, the $c$-stability property involves the laws that are unknown and need to be estimated. Examples of time-inhomogeneous Markov chains for which it was possible to check $c$-stability are small perturbations of time-homogeneous Markov chains that furthermore satisfy some symmetries. Under the assumption of $c$-stability, a minor modification of Theorem \ref{homogeneP} holds. Its proof relies on singular values theory.

In this article, we work under a different assumption: the existence of a non-decreasing finite environment.
\begin{remark}[Notation]When $\pi$ is a finite measure, we denote by $\Tilde{\pi}$ the probability measure given by $\pi$, that is to say $\Tilde{\pi}:= \frac{\pi}{\pi(V)}$.   
\end{remark}
\begin{definition} \label{finite envir}
	Let $(K_t)_{t\ge 1}$ be a sequence of Markov transition operators and let $(\pi_t)_{t\ge 1}$ be a sequence of positive measures. We say that $\{(K_t,\pi_t)\}_{t\ge 1}$ is a non-decreasing environment when the sequence $(\pi_t)_{t\ge 1}$ satisfies the following:
    \begin{itemize}
        \item for all $ t \ge 1,\pi_tK_t =\pi_t$,
        \item for all $ t \ge 1, \pi_t(V) < +\infty$,
        \item for all $ t\ge 1,$ for all  $x\in V\,,\, \pi_{t+1}(x) \ge \pi_t(x)$.
    \end{itemize}
\end{definition}
One of the advantages of a non-decreasing finite environment is the following:
\begin{remark}\label{Inclusion}
Let $\{(K_t,\pi_t)\}_{t\ge 1}$ be a non-decreasing environment. Then, we have:
\begin{equation}\label{biendef}
	\forall t\ge 1\,,\,\forall p\in [1,+\infty]\,,\quad \ell^p(\pi_{t+1})\subset \ell^p(\pi_{t})\,.
\end{equation}  
And, for all $t\geq 1$, the operator $K_t :\ell^2(\pi_t) \to \ell^2(\pi_{t-1})$ is well-defined (we set $\pi_0=\pi_1$).
Inclusion \ref{biendef} may be a natural property to be expected. 
\end{remark}
Note that in Definition \ref{finite envir}, the $\pi_t$'s are measures but not necessarily probability measures. Indeed, if they were all probability measures then they would all be the same. \\
Moreover, when a non-decreasing finite environment exists, an infinite number of non-decreasing finite environments exists. Indeed, let $(a_t)_{t\ge 1}$ be a non-decreasing sequence of positive real numbers, if $\{(K_t,\pi_t)\}_{t\ge 1}$ is a finite non decreasing environment then,  $\{(K_t,a_t\pi_t)\}_{t\ge 1}$ is also a finite non decreasing environment.

In the examples, see Section \ref{examples}, we will leverage the availability of multiple choices for invariant measures and play with these different options to, for instance, facilitate the study of the Poincar\'e constants or to ensure that total masses are not too large.\begin{theorem}\label{PoincaréthmMer}
Let $(K_t)_{t \ge 1}$ be a sequence of irreducible aperiodic Markov transition operators and $(\pi_t)_{t\ge 1}$ a sequence of finite measures. Assume that the sequence $\{(K_t,\pi_t)\}_{t\ge 1}$ is a finite non-decreasing environment. Let $\gamma_t=\gamma(K_t^*K_t)$ be the Poincar\'e constant associated to $K_t^*K_t$. We recall that $\Tilde{\pi}_1$ denotes the probability measure $\pi_1()/\pi_1(V)$.\\
Then, for all $x,y\in V$,
\begin{equation}\label{Equation 1}
	\forall t\geq 1\,,\,d_{TV}( \mu_t^x ,\mu_t^y)\le\frac{1}{2}\sqrt{\frac{\pi_t(V)}{\pi_1(V)}} \left(\frac{1}{\sqrt{\Tilde{\pi}_1(x)}}+\frac{1}{\sqrt{\Tilde{\pi}_1(y)}}\right)\prod_{s=1}^t \sqrt{1-\gamma_s} \, .
\end{equation} 
Therefore, for all $\eta\in(0,1)$,
\begin{equation}\label{Estime2}
	T_{\text{mer}}(x,y,\eta) \le \min\left\{t\ge 1\mid  \frac{1}{2}\sqrt{\frac{\pi_t(V)}{\pi_1(V)}}\left(\frac{1}{\sqrt{\Tilde{\pi}_1(x)}}+\frac{1}{\sqrt{\Tilde{\pi}_1(y)}}\right)\prod_{s=1}^t \sqrt{1-\gamma_s}  \le \eta\right \}\, .
\end{equation} 
\end{theorem}
\begin{remark}
    We adopt the convention:
    $$ \min \emptyset = +\infty \,. $$
    Thus, when the minimum in \ref{Estime2} in Theorem \ref{PoincaréthmMer} is infinite, then, we get $T_{\text{mer}}(x,y,\eta) \leq +\infty$. We adopt the same convention in Theorems \ref{MergingNash},\ref{LogSobolevThm} and \ref{LogSobolevThm2}.
\end{remark}
\begin{remark}
Note that Estimate \ref{Equation 1} is very close to the conclusions of Theorem \ref{homogeneP}. In addition, when $(\pi_t)_{t\ge 1}$ and $(K_t)_{t\ge 1}$ are constant, we retrieve the exact statement of Theorem \ref{homogeneP}.    
\end{remark}
To ensure the assumption of a finite non-decreasing environment, the challenge lies in computing explicit invariant probability measures and our ability to compare them. Roughly speaking, there is a competition between the ratio of the total masses and the product of eigenvalues to achieve a reasonably small merging time. When the ratio of the total masses is too large, the right-hand side of Equation \ref{Equation 1} might not tend to $0$, see Example \ref{unbounded case 1}. In fact, the existence of a non-decreasing environment does not imply that merging will occur, see Example \ref{unbounded case 1}. Our Estimate \ref{Estime2} is not always sharp, see Example \ref{stick}.

If $V$ is a finite set and $(K_t)_{t\ge 1}$ a family of irreducible Markov transition operators, there always exists a family of finite measures $(\pi_t)_{t\ge 1}$ such that $\{(K_t,\pi_t)\}_{t\ge 1}$ is a finite non-decreasing environment. However, the ratio of the total masses can increase very fast and this deteriorates our estimate.

A class of examples where the hypothesis on the existence of a finite non-decreasing environment is easy to check is Markov chains on electrical networks, see \cite{LPeres} for further details. Then, the sum of the conductances at a given vertex provides an explicit invariant measure. We give a definition/proposition:
\begin{definition}\label{ElectricNetwork}
    Let $E$ be a subset of $V\times V$ such that if $(x,y) \in E$, then, $(y,x) \in E$. We name $E$ a set of edges. A set of conductances on $E$ is a function $c:E \mapsto \mathbb{R}$ satisfying:
    $$ \forall(x,y)\in E\,,\quad c(x,y)\geq 0 \quad \text{and}\quad c(x,y)=c(y,x)\,.$$
    We associate to the set of conductances $c$ the Markov transition operator $P$ defined by:
    $$\forall (x,y)\in V\,,\quad P(x,y)=\frac{c(x,y)}{\sum_{z\in V}c(x,z)}\, .$$
     We add the following assumption:
    \begin{equation}\label{mesurefinie}
     \sum_{x\in V}\sum_{y\in V} c(x,y) <+\infty\, .  
    \end{equation}
    Let $\pi$ be the positive measure defined by:
    \begin{equation}\label{egalite1}
    \forall x \in V\,, \quad \pi(x)=\sum_{y\in V}c(x,y)\, .
    \end{equation}
    Assumption (\ref{mesurefinie}) implies that $\pi$ is a finite measure.
    Note that $P$ is reversible with respect to $\pi$. Indeed,
    $$\forall (x,y)\in V\,, \quad \pi(x) P(x,y) = c(x,y)=c(y,x)=\pi(y)P(y,x) \,.$$
    It implies that $\pi$ is a finite invariant distribution for $P$. The pair \((P, \pi)\) induced by \(c\) is called an electrical network.
\end{definition}
\begin{remark}\label{reciproquedefonduct}
When the Markov transition operator $P$ is self-adjoint from $\ell^2$ to $\ell^2$ with respect to some measure then, there exists a set of conductances $c$ so that:
$$\forall (x,y)\in V\,,\quad  P(x,y)=\frac{c(x,y)}{\sum_{z\in V}c(x,z)}\, .$$
Note that multiple choices of set of conductances can be associated with $P$. More precisely, if $c$ is a set of conductances associated with $P$, then for any $\lambda\in \mathbb{R}_+^*$, the set of conductances $\lambda c$ is also associated with $P$.
\end{remark}
A sequence of electrical networks with non-decreasing conductances forms a non-decreasing environment. A typical example of a time-inhomogeneous Markov with a non-increasing environment is as follows: we fix a set of edges $E$, a reference set of conductances $c$ and a positive constant $M>0$. Consider a sequence of sets of conductances sets $(c_t)_{t\ge 1}$ such that:
$$ \forall t\geq 1\,,\,\forall (x,y) \in E\,,\quad  c(x,y) \leq c_t(x,y) \leq Mc(x,y) \,. $$
For $t\geq 1$, let $P_t$ be the Markov transition operator associated with $c_t$, see Definition \ref{ElectricNetwork}. At time $t-1$, we apply the operator $K_t:=\frac{1}{2}(I+P_t)$. We will see that this type of example has several advantages, see Example \ref{stick}. The first one is that the sequence $(\pi_t)_{t\ge 1}$ given by (\ref{egalite1}) gives a finite non-decreasing environment. The second advantage is, for example, that if the operator associated with $c$ satisfies a Poincar\'e condition, then \( K_t \) does as well, with a Poincar\'e constant comparable to the original one.

One method for generalizing electrical networks is to consider graphs with cycles, see \cite{Cycles} and \cite{Math} for a presentation of cycles.

Additionally, note that we may construct examples of sequences of electrical networks where the $\pi_t$'s are non-decreasing but the conductances themselves are not monotonous, see Section \ref{fromtoanother}.

Effective tools for studying $\ell^2$-merging times and complementary to spectral techniques are advanced functional inequalities: Nash inequalities and logarithmic Sobolev inequalities. We refer to the lecture notes from St. Flour by Saloff-Coste \cite{SF} for details on the use of these functional inequalities in the context of time-homogeneous Markov chains with a finite state space.
Saloff-Coste and Z\'u\~niga successfully applied these functional inequalities under the $c$-stability assumptions to estimate the merging time of a time-inhomogeneous chain.
Now, we show the results obtained using Nash and logarithmic Sobolev inequalities. First, we need to define a relative point-wise distance in relation to a probability measure.
\begin{definition}
Let $V$ be a finite set. Fix $\Tilde{\pi}$ a probability measure on $V$. Assume that for all $x\in V$, $\tilde{\pi}(x)>0$. We define the relative point-wise distance in relation to $\Tilde{\pi}$ by:
\begin{equation*}
   \text{for all probability measures } (\mu, \nu) \text{ on } V\,, \quad s_{\infty}( \mu \,,\,  \nu\mid \tilde{\pi} ) = \max\left\{ \left| \frac{\mu(x)-\nu(x)}{\Tilde{\pi}(x)}\right| \mid x\in V \right\}\,.
\end{equation*}
\end{definition}
\begin{definition}\label{dynamicmergingtime}
Let $\{(K_t,\pi_t)\}_{t\ge 1}$ be a finite non-decreasing environment.
We define a dynamic merging-time in relation to the sequence $(\pi_t)_{t\geq 1}$ by:
$$\forall \eta \in (0,1)\,,\quad T_{\text{mer}}^{\infty}( \eta \,, (\pi_t)_{t\geq 1} ) = \inf \left\{ t\geq 1 \mid \forall (x,y) \in V\,,\, s_{\infty}( \mu_t^x \,,\,  \mu_t^y\mid \tilde{\pi}_t ) \leq \eta\right\}\,.$$
\end{definition}
\begin{lemma}\label{lemmetempsmelange}
Let $\{(K_t,\pi_t)\}_{t\ge 1}$ be a finite non-decreasing environment. For each $t\geq 1$, we recall that $\tilde{\pi}_t$ denotes $\pi_t/\pi_t(V)$. For $(x,y)$ in $V$, the sequence $\left(\frac{1}{\pi_t(V)} s_{\infty}( \mu_t^x \,,\,  \mu_t^y\mid \tilde{\pi}_t) \right)_{t\geq 1}$ is non-increasing.
\end{lemma}
We recall that a Nash inequality is defined as follows:
\begin{definition}\label{DefNashIn}
Let \( T \geq 1 \), \( Q \) be a Markov transition operator, and \( \Tilde{\pi} \) a probability measure. Let \( C, D > 0 \). We say that the pair \( (Q, \Tilde{\pi}) \) satisfies the \( \mathcal{N}(C, D, T) \) Nash inequality if the following holds:
$$
\forall f: V \rightarrow \mathbb{R}\,, \quad\|f\|_{\ell^2(\Tilde{\pi})}^{2+1 / D} \leq C\left( \mathcal{E}_{Q,\Tilde{\pi}}(f,f) 	 +\frac{1}{T}\|f\|_{\ell^2\left(\Tilde{\pi}\right)}^2\right)\|f\|_{\ell^1(\Tilde{\pi})}^{1 / D} \,.
$$
\end{definition}

Under this assumption, we have:
\begin{theorem}\label{MergingNash}
Let $(K_t)_{t \ge 1}$ be a sequence of irreducible aperiodic Markov transition operators on a finite set $V$ and $(\pi_t)_{t\ge 1}$ a sequence of positive measures. 
Assume that the sequence $\{(K_t,\pi_t)\}_{t\ge 1}$ is a finite non-decreasing environment. For $t\geq 1$, let $Q_t=K_t^*K_t$ and $\gamma_t$ be $\gamma(Q_t)$. Let $T \geq 1$ and $C, D>0$. Assume that for all $1\leq t\leq T$, the pair $(Q_t,\Tilde{\pi}_t)$  satisfies the $\mathcal{N}(C,D,T)$ Nash inequality. Assume $\pi_1(V)\ge 1$ and define $B$ by: $$B=B(D, T)=2^{1/D}(1+1 / T)(1+[ 4 D])\,.$$
Then, for all $x,y\in V$,
\begin{equation}
	\forall r\leq T\,,\,\forall t\geq r\,,\quad d_{TV}(\mu_t^x,\mu_t^y)\le 2\sqrt{\pi_t(V)}\left(\frac{4 C B}{r+1}\right)^D\prod_{l=r+1}^t \sqrt{1-\gamma_l} \, .
\end{equation} 
And, for all $\eta \in (0,1)$,
\begin{equation*}
T_{\text{mer}}(\eta) \le \min\left\{t\ge 0\mid  \exists\,   r \leq T \,,\, t-r\geq 0 \quad  \text{and}\quad  2\sqrt{\pi_t(V)}\left(\frac{4 C B}{r+1}\right)^D\prod_{l=r+1}^t \sqrt{1-\gamma_l}  \le \eta\right\}\,.
\end{equation*}
Moreover, we have for $r \le T$ and $t= 2r+u,$
\begin{equation}
	\max \left\{ s_{\infty}(\mu_t^x,\mu_t^y \mid \Tilde{\pi}_t) \mid (x,y)\in V\right\} \le  8\pi_t(V)\left(\frac{4 C B}{r+1}\right)^{2D}  \prod_{l=r+1}^{t-r} \sqrt{1-\gamma_l}\, .
\end{equation} 
So, for all $\eta \in (0,1)$,
$$ T_{\text{mer}}^{\infty}( \eta \,, (\pi_t)_{t\geq 1} ) \leq  \inf \left\{ t\geq 1 \mid  8\pi_t(V)\left(\frac{4 C B}{r+1}\right)^{2D}  \prod_{l=r+1}^{t-r} \sqrt{1-\gamma_l} \leq \eta\right\}\,. $$
\end{theorem}
\begin{remark}
    The form of Nash inequality in Definition \ref{DefNashIn} does not imply that the Poincar\'e constant is not 1. The advantages of the Nash inequality in Definition \ref{DefNashIn} are explained in the context of time-homogeneous Markov chains in Section 2.2.3 of \cite{SF}.
\end{remark}
We state a bound on the merging time in finite non-decreasing environments that satisfy a logarithmic Sobolev inequality at each time. We recall the expression of the entropy:
\begin{definition}\label{definitionentropy}
Let $\Tilde{\pi}$ be a probability measure in $V$. Let $f$ be a non-negative function in $\ell^1(\Tilde{\pi})$. We define the entropy of $f$ with respect to $\Tilde{\pi}$ by:
$$\mathcal{L}(f\mid \Tilde{\pi}):= \sum_{x\in V} f(x)\log\left(\frac{f(x)}{\Tilde{\pi}(f)}\right)\Tilde{\pi}(x)\,.$$
\end{definition}
\noindent
Note that Jensen inequality implies that $\mathcal{L}(f|\Tilde{\pi})$ is non-negative.

The definition of the logarithmic Sobolev constant is similar to the one of the Poincar\'e constant except that the variance is replaced by the entropy. 
\begin{definition}\label{definitionconstantelogso}
The standard logarithmic Sobolev constant of $Q$ is the constant\\ $\alpha(Q)\geq 0$ defined as the optimal constant in the inequality: $$\forall f: V \rightarrow \mathbb{R} \,,\quad f- \text{non constant and non-negative}\,,\quad\frac{\mathcal{E}_{Q,\Tilde{\pi}}(f,f)}{\mathcal{L}(f^2
\mid \Tilde{\pi})} \ge \alpha\, . $$
\end{definition}

Thanks to hypercontractivity, we can control the merging times for finite non-decreasing environments with the following:\begin{theorem}\label{LogSobolevThm}
Let $(K_t)_{t \ge 1}$ be a sequence of irreducible aperiodic Markov transitions operators and $(\pi_t)_{t\ge 1}$ a sequence of positive measures. Assume that the sequence $\{(K_t,\pi_t)\}_{t\ge 1}$ is a finite non-decreasing environment. For each $t\geq 1$, let $\gamma_t=\gamma(K_t^*K_t)$ be the Poincar\'e constant associated to $K_t^*K_t$ and $\alpha_t$ be the logarithmic Sobolev constant $\alpha(K_t^*K_t)$. For all $s\geq1$ and $z\in V$, we define:
$$q_s=2\prod_{u=1}^s (1+\alpha_u)\quad \text{and}\quad r_z=\min\left\{ s \ge 1\mid   \log(q_s) \ge \log\left(\log\left(\frac{1}{\Tilde{\pi}_1(z)}\right)\right)\right\} \, .$$
For $x,y \in V$, let $r(x,y) =\max(r_x,r_y)$. Then, 
\begin{equation}
    \forall t \ge r(x,y)\,, \quad d_{TV}(\mu_t^x,\mu_t^y)\leq 
    e\sqrt{\frac{\pi_t(V)}{\pi_1(V)}}\prod_{l=r+1}^t \sqrt{1-\gamma_l} \, .
\end{equation} 
Therefore, for all $\eta\in (0,1)$,
\begin{equation}
	T_{\text{mer}}(x,y,\eta) \le r(x,y)+\min\left\{u\ge 1\mid e\sqrt{\frac{\pi_t(V)}{\pi_1(V)}}\prod_{l=r+1}^{r+u} \sqrt{1-\gamma_l} \le \eta\right\}\, .
\end{equation}\end{theorem}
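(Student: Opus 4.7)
The plan is to adapt the classical two-phase argument of Diaconis--Saloff-Coste to the time-inhomogeneous setting: a short hypercontractive phase of length $r$ using the log-Sobolev constants $\alpha_s$ to tame the initial Dirac singularity, followed by a Poincar\'e contraction phase of length $t-r$, and finally a passage from $L^2$ to total variation via Cauchy--Schwarz. The new difficulty is the time-dependence of the invariant measures $\tilde{\pi}_s$, and this is where the non-decreasing environment enters crucially. The natural quantity to track is $F_s^z := \mu_s^z/\pi_s$; a direct computation using $\pi_{s+1}K_{s+1}=\pi_{s+1}$ gives $F_{s+1}^z = K_{s+1}^{\ast}(F_s^z\,\pi_s/\pi_{s+1})$ where $K_{s+1}^{\ast}$ denotes the adjoint in $L^2(\tilde{\pi}_{s+1})$, and since $\pi_s \leq \pi_{s+1}$ we obtain the pointwise domination $F_{s+1}^z \leq K_{s+1}^{\ast} F_s^z$, which will absorb the change of reference measure throughout.

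For the Poincar\'e phase ($r+1 \leq s \leq t$), set $g_s := F_s^x - F_s^y$. The identity $\int (g_s \pi_s/\pi_{s+1})\,d\pi_{s+1} = \int g_s\,d\pi_s = 0$ justifies applying the Poincar\'e inequality for $K_{s+1}^\ast K_{s+1}$ in $L^2(\tilde{\pi}_{s+1})$ to $g_s\pi_s/\pi_{s+1}$; combined with the elementary inequality $\pi_s^2/\pi_{s+1} \leq \pi_s$, this yields a one-step contraction $\|g_{s+1}\|_{L^2(\pi_{s+1})}^2 \leq (1-\gamma_{s+1})\|g_s\|_{L^2(\pi_s)}^2$. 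Iterating from $s=r$ to $s=t-1$ and then applying Cauchy--Schwarz against $\pi_t$ gives
\[
d_{TV}(\mu_t^x,\mu_t^y) \leq \tfrac{1}{2}\sqrt{\pi_t(V)}\,\|g_r\|_{L^2(\pi_r)}\prod_{s=r+1}^t\sqrt{1-\gamma_s},
\]
which produces the $\sqrt{\pi_t(V)}$ factor in the final bound.

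For the hypercontractive phase ($1 \leq s \leq r$), it remains to estimate $\|F_r^z\|_{L^2(\pi_r)}$ for $z \in \{x,y\}$. Switching to the probability-normalized density $\tilde{F}_s^z := \mu_s^z/\tilde{\pi}_s$, one iterates a discrete Gross-type hypercontractive inequality associated with the log-Sobolev constant $\alpha_s$ of $K_s^\ast K_s$: at each step an $L^{q_{s-1}}(\tilde{\pi}_{s-1})$-norm is propagated to an $L^{q_s}(\tilde{\pi}_s)$-norm with $q_s = (1+\alpha_s)q_{s-1}$, the shift of reference measure again being absorbed by the pointwise inequality from the setup. The initial value $\|\tilde{F}_0^z\|_{L^2(\tilde{\pi}_0)} = \tilde{\pi}_0(z)^{-1/2}$ together with the defining property $q_{r_z} \geq \log(1/\tilde{\pi}_0(z))$ of $r_z$ is exactly what is needed to upgrade this iteration into a bound of order $e$ on $\|\tilde{F}_r^z\|_{L^{q_r}(\tilde{\pi}_r)}$, and hence on $\|\tilde{F}_r^z\|_{L^2(\tilde{\pi}_r)}$ since $\tilde{\pi}_r$ is a probability. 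Converting back via $\|F_r^z\|_{L^2(\pi_r)}^2 = \|\tilde{F}_r^z\|_{L^2(\tilde{\pi}_r)}^2/\pi_r(V)$, and carefully tracking the $L^{q_s}(\tilde{\pi}_s) \leftrightarrow L^{q_s}(\pi_s)$ conversions through the iteration, produces the denominator $\pi_0(V)^{1/q_r}$ in the stated bound; inserting into the Poincar\'e estimate yields both the $d_{TV}$ inequality and the merging-time bound.

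The main obstacle I expect is the discrete hypercontractive step: one needs a Gross-type inequality that propagates $L^{q_s}$-norms through the (non-symmetric) operators $K_s^\ast$ in a time-inhomogeneous setting, with constants matching the recursion $q_s = (1+\alpha_s)q_{s-1}$. The non-decreasing environment assumption is exactly the ingredient that makes the argument go through: the pointwise inequality $F_{s+1}^z \leq K_{s+1}^\ast F_s^z$ absorbs the time-dependence of the reference measures in both phases without introducing uncontrolled error at each step.
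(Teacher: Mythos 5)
Your overall architecture is the right one and matches the paper's: a hypercontractive phase of length $r$ followed by a Poincar\'e contraction phase, with the monotonicity $\pi_s\le\pi_{s+1}$ absorbing the change of reference measure at each step. Your Poincar\'e phase is correct and is essentially the forward-density dual of the paper's backward induction on $a_t(s)=\|K_{t-s,t}f-\Tilde{\pi}_{t-s}(K_{t-s,t}f)\|^2_{\ell^2(\pi_{t-s})}$: your identity $F_{s+1}^z=K_{s+1}^*\bigl(F_s^z\,\pi_s/\pi_{s+1}\bigr)$, the centering $\int g_s\,d\pi_s=0$, and the bound $\pi_s^2/\pi_{s+1}\le\pi_s$ reproduce exactly the variance-minimality and domination steps of the paper's Theorem \ref{Poincarthm}. (One point you gloss over: the Poincar\'e constant you are given is $\gamma(K^*K)$, which controls $\|Kh\|_2$, whereas you contract $\|K^*h\|_2$; this is fine because $K^*K$ and $KK^*$ have the same nonzero spectrum, but it deserves a sentence.)

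The hypercontractive phase, however, has a genuine gap as written. You start the iteration from $\|\tilde F_0^z\|_{L^2(\tilde\pi_0)}=\tilde\pi_0(z)^{-1/2}$ and propagate $L^{q_{s-1}}\to L^{q_s}$ with $q_0=2$ and $q_s$ increasing. That chain can only yield $\|\tilde F_r^z\|_{L^{q_r}(\tilde\pi_r)}\le \tilde\pi_0(z)^{-1/2}$ (up to mass factors), hence $\|\tilde F_r^z\|_{L^2(\tilde\pi_r)}\le\tilde\pi_0(z)^{-1/2}$ --- no improvement over skipping the phase entirely, and the defining property of $r_z$ never enters anywhere in the computation, so the claimed ``bound of order $e$'' does not follow. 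The point of the $r_z$ calibration is that the Dirac density must be measured in $\ell^{p}$ with $p$ \emph{close to $1$}, where $\|h_0^z\|_{\ell^{q_r'}(\tilde\pi_0)}=\tilde\pi_0(z)^{-1/q_r}\le e$ precisely when $q_r\ge\log(1/\tilde\pi_0(z))$; one then needs the dual hypercontractive estimate carrying $\ell^{q_r'}$ up to $\ell^2$, i.e.
$\|K_{0,r}^{\rightarrow}\|_{\ell^{q_r'}(\tilde\pi_0)\rightarrow\ell^{2}(\tilde\pi_r)}=\|K_{0,r}\|_{\ell^{2}(\tilde\pi_r)\rightarrow\ell^{q_r}(\tilde\pi_0)}\le \pi_r(V)^{1/2}/\pi_0(V)^{1/q_r}$,
which is the content of the paper's Theorem \ref{Hypercontractivitythm} (proved by the same monotone-measure induction you use, but on test functions with exponents $\ge 2$, then dualized). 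So the repair is not cosmetic: you must either dualize and work with exponents in $(1,2]$ on the density side, or bound the operator norm $\ell^2\to\ell^{q_r}$ on the function side and apply it to $h_0^z$ measured in $\ell^{q_r'}$. As it stands, your phase one proves nothing beyond the Poincar\'e-only estimate.
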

\begin{lemma}\label{comparaisga}
    Let $Q$ be a Markov transition operator. Assume that $Q$ is aperiodic then, the logarithmic Sobolev constant $\alpha(Q)$ is non-negative. And, the logarithmic Sobolev constant $\alpha(Q)$ in Definition \ref{definitionconstantelogso} and the Poincar\'e constant $\gamma(Q)$ satisfy: $$2 \alpha(Q)\leq 1-\gamma(Q) \,.$$
    When the logarithmic Sobolev constant is strictly positive, the Poincar\'e constant is not 1.
\end{lemma}
When the state set is finite and when the logarithmic Sobolev constants are uniformly bounded from below, one can control the relative-sup merging time:
\begin{theorem}\label{LogSobolevThm2}
Let $(K_t)_{t \ge 1}$ be a sequence of irreducible aperiodic Markov transition operators and $(\pi_t)_{t\ge 1}$ a sequence of positive measures. Assume that the sequence $\{(K_t,\pi_t)\}_{t\ge 1}$ is a finite non-decreasing environment. Assume that $V$ is finite. Let $\gamma_t=\gamma(K_t^*K_t)$ be the Poincar\'e constant associated to $K_t^*K_t$, let $\alpha_t$ and $\hat{\alpha}_t$ be the logarithmic Sobolev constant $\alpha(K_t^*K_t)$ and $\alpha(K_tK_t^*)$ respectively. Assume that for all $t\geq 1$, the constants $\alpha_t$ and $\hat{\alpha}_t$ are bounded from below by $\alpha >0$. Let $\Tilde{\pi}_t^{\#}=\min\left\{\tilde{\pi}_t(x)\mid x\in V\right\}$. And, assume that it exists $\rho >0$ such that:
$$ \forall t \ge 1\,,\quad \tilde{\pi}_t^{\#} \ge \rho \, .$$ 
Define $r$ by:
$$r= \left( \frac{\log\left(\frac{\log(1/\rho)}{2}\right)}{\log(1+\alpha)}\right) +1\,.$$
Let $u\geq 0$ and set $t =2r+u$. Then, we find:
$$	\max \left\{ s_{\infty}(\mu_t^x,\mu_t^y \mid \Tilde{\pi}_t) \mid (x,y)\in V\right\} \le 4e^2\sqrt{\frac{\pi_{t-r}(V)}{\pi_1(V)} \prod_{l=r+1}^{r+u} 1-\gamma_{r}}\,. $$
So, for all $\eta \in (0,1)$,
$$ T_{\text{mer}}^{\infty}( \eta \,, (\pi_t)_{t\geq 1} ) \leq \inf \left\{   u +2r \mid u\geq 1\,,\,  4e^2\sqrt{\frac{\pi_{r+u}(V)}{\pi_1(V)} \prod_{l=r+1}^{r+u} 1-\gamma_{r}}\leq \eta \right\}\,.$$
\end{theorem}
\begin{remark}
    The concept of monotonous invariant measures already appeared in the literature to deal with issues of recurrence versus transience, see \cite{AM}.\\
    In \cite{DAHZ}, the idea of non-decreasing invariant measure is highlighted. Dembo, Huang, and Zheng establish two-sided Gaussian estimates for the transition kernel of a time-inhomogeneous Markov chain on a non-decreasing sequence of electrical networks satisfying a suitable uniform Poincar\'e inequality and the volume doubling property.\\
    In our paper, in the proofs, we have succeeded in adapting ideas of \cite{DAHZ} such as backward induction in the Poincar\'e case. However, unlike in \cite{DAHZ}, some extra care is needed because we have to center our test functions with respect to measures that evolve over time.
\end{remark}
\subsection{Outline}
This article is divided into three parts. Section \ref{Introdu} presents the motivations, background, and main theorems. In Definition \ref{finite envir}, we introduce our main hypothesis: the existence of a non-decreasing finite environment and a new merging time related to a sequence of measures, see Definition \ref{dynamicmergingtime}. Each theorem extends classical functional inequalities to the non-decreasing finite environment hypothesis. The first theorem, Theorem \ref{PoincaréthmMer}, assumes a Poincar\'e condition at each time, in addition to the non-decreasing finite environment hypothesis, and states control of the merging time in total variation distance. The second theorem, Theorem \ref{MergingNash}, assumes a Nash inequality at each instance and states control of the merging time in total variation distance and yields a control on the dynamic merging-time. Finally, Theorem \ref{LogSobolevThm} assumes a Logarithmic Sobolev inequality at each time and similarly states control of the merging time in total variation distance and yields a control on the dynamic merging-time when the set $V$ is finite with Theorem \ref{LogSobolevThm2}. Theorems \ref{PoincaréthmMer},\ref{MergingNash} and \ref{LogSobolevThm2} are new tools for understanding finite Markov chains in the inhomogeneous time context.

Section \ref{examples} deals with examples and investigates five instances within the context of electrical networks. The first example serves an educational purpose, while the subsequent two illustrate the application of Theorem \ref{PoincaréthmMer}. The final two examples are within the classical framework for the application of Nash inequalities and Logarithmic Sobolev inequalities.

The third section delves into the proofs and is divided into four sub-sections. The first one revisits the preliminaries and classical definitions from analysis and probability theory. The second part focuses on proving Theorem \ref{Poincaréthm} and deals with the Poincar\'e constants. It includes a crucial proposition regarding the duality of operators within the inhomogeneous time context. The third segment contains the proof of Theorem \ref{MergingNash}, reiterating the definition of a Nash inequality. Finally, in the fourth subsection, we adopt an entropic approach and prove Theorem \ref{LogSobolevThm} and \ref{LogSobolevThm2}.

\newpage
\section{Examples}\label{examples}
In this section, we discuss pedagogical examples and other interesting cases.
\begin{example}[Conductances on a stick]\label{stick}  
Let $N\geq 1$ and $V_N$ be $\mathbb{Z}/N\mathbb{Z}$ and let the set of edges $E_N$ be $\{ (x,y) \in \mathbb{Z}/N\mathbb{Z}\mid  |x-y|=1\}$. Let $c_0$ be the set of conductances defined by:
$$ \forall (x,y)\in E_N\,,\quad c_0(x,y) =1 \,. $$
Let $(c_t)_{t\ge 1}$ be a sequence of sets of conductances on $E_N$. Assume that:
\begin{itemize} \item for all $ t \geq 1,\, c_t \geq c_0$,
\item and for all $ (x, y) \in E_N$, the sequence $(c_t(x,y))_{t \geq 1}$ is non-decreasing. \end{itemize}
\begin{figure}[H]
    \centering
    \includegraphics[scale=0.4]{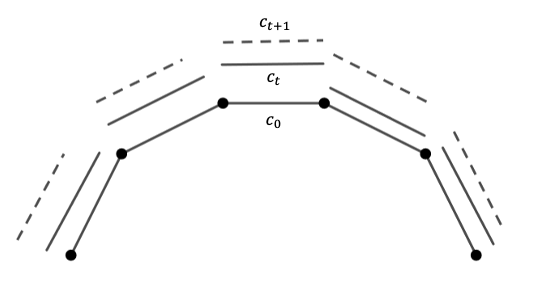}
    \caption{Non-decreasing sets of conductances}
    \label{fig: croistick}
\end{figure}
\noindent
For $t \geq 1$, let $P_t$ be the Markov transition operator on $V_N$ associated to the set of conductances $c_t$ and let $\pi_t$ be its reversible measure, see Definition \ref{ElectricNetwork}. We study the Markov transition operators $(K_t)_{t\ge 1}$ defined by:
$$\forall t\ge 1\,,\quad K_t =\frac{1}{2}(I+P_t)\,. $$
For each $t\geq 1$, the operators $P_t$ and $K_t$ are reversible with respect to $\pi_t$. Let $\gamma_t$ be the Poincar\'e constant $\gamma(K^*_tK_t)$. We study the $K_t$'s instead of the $P_t$'s to compare Dirichlet forms. \\~\\
\textbf{We give a lower bound on the Poincar\'e constants under the following assumption: }
\begin{equation}
    \exists M >0 \,,\forall t \ge 1, \forall x \in V_N \,,\quad \sum_{y\in V_N} c_t(x,y) \leq M \,.
\end{equation}
Let \( P \) denote the Markov transition operator associated with the set of conductances \( c_0 \), and let \( u \) be its reversible measure. The operator \( P \) corresponds to the Markov transition operator of the simple random walk. Denote with $\gamma_u$ the constant $\gamma(P)$. It is well known, see \cite{Pbook} that:
$$\exists \kappa >0 \,,\quad \gamma_u =\frac{\kappa}{N^2}\,.$$
We define for $\pi$ a finite measure on $V_N$:
\begin{equation}\label{defintionvariancenonproba}
    \forall f :V_N \rightarrow \mathbb{R}\,,\quad \text{Var}_{\pi}(f):= \pi(V_N)\text{Var}_{\Tilde{\pi}}(f)\,.
\end{equation}
Note that:
\[
\gamma_u = \inf_{\substack{f \in \ell^2(u) \\ f \text{ non-constant}}} \left\{ \frac{\mathcal{E}_{P,u}(f)}{\text{Var}_{u}(f)}\right\} \quad \text{and} \quad \forall t \geq 1\,,\quad  \gamma_t = \inf_{\substack{f \in \ell^2(\Tilde{\pi}_t) \\ f \text{ non-constant}}}\left\{  \frac{\mathcal{E}_{K_t^*K_t,\pi_t}(f)}{\text{Var}_{\pi_t}(f)} \right\}\,.
\]
We use a comparison method in order to estimate the spectral gap of $K^*_tK_t$. We get:
$$
\begin{aligned}
\forall f \in \ell^2(\Tilde{\pi}_t)\,,\, \mathcal{E}_{P,u}(f, f)&=\frac{1}{2} \sum_{x,y}(f(x)-f(y))^2 u(x) P(x, y)\\
&=\frac{1}{2} \sum_{x,y\,,\, |x-y|=1}(f(x)-f(y))^2  \\
& \leq \frac{1}{2} \sum_{x \neq y}(f(x)-f(y))^2 c_t(x,y)\quad\text{since }c_t(x,y) \geq \mathbf{1}_{|x-y|=1}\,, \\
& \leq \frac{1}{2} \sum_{x \neq y}(f(x)-f(y))^2\pi_t(x) P_t(x,y) \\
& = \mathcal{E}_{P_t,\pi_t}(f, f) \leq 2\mathcal{E}_{K_t,\pi_t}(f, f)\,.
\end{aligned}
$$
Note that $\forall x \in V_N\,,\, \forall t\geq 1\,,\, K_t^*(x, x)= K_t(x, x) \geq 1/2$, and then, we get:
$$
\begin{aligned}
\forall y \in V\,,\quad \quad \pi_t(x) K_t^*K_t(x, y) & \geq \pi_t(x) K_t^*(x, x) K_t(x, y)+\pi_t(x) K_t^*(x, y) K_t(y, y)\,, \\
& \geq \frac{1}{2}\left( \pi_t(y) K_t(y, x)+ \pi_t(x) K_t^*(x, y)\right)\,.
\end{aligned}
$$
We finally find:
$$
\mathcal{E}_{K_t^*K_t,\pi_t}(f, f) \geq \mathcal{E}_{K_t,\pi_t}(f, f)\geq \frac{1}{2} \mathcal{E}_{P_t,\pi_t}(f, f) \,.
$$
Next, we compare variances:
$$
\begin{aligned}
\forall f \in \ell^2(\Tilde{\pi}_t)\,,\quad \operatorname{Var}_{\pi_t}(f)&= \pi_t(V_N) \operatorname{Var}_{\tilde{\pi}_t}(f) \\ 
&= \pi_t(V_N) \sum_{x\in V_N} \left(f(x)- \tilde{\pi}_t(f)\right)^2 \tilde{\pi}_t(x) \\
&\leq \pi_t(V_N) \sum_{x\in V_N} \left(f(x)- \tilde{u}(f)\right)^2 \tilde{\pi}_t(x) \\
&= \sum_{x\in V_N} \left(f(x)- \tilde{u}(f)\right)^2 \pi_t(x)  \\
&\leq  M \sum_{x\in V_N} \left(f(x)- \tilde{u}(f)\right)^2 u(x) \quad \text{using that }\, \pi_t \le M u\,, \\ 
&= M \operatorname{Var}_{u}(f) \,.    
\end{aligned}
$$
We finally find that:
\begin{equation*}
    \forall t \ge 1\,,\quad
\gamma_t \geq  \frac{1}{2M}\gamma_u \,.
\end{equation*}
\textbf{We now give a bound on the merging time .}\\~\\
Let \( T_{\text{mer}} \) be the merging time of the time-inhomogeneous Markov chain driven by the sequence $(K_t)_{t\ge 1}$. Let us summarize for all $x\in V_N$, the sequence $(\pi_t(x))$ is non-decreasing and:
$$
\forall t\geq 1\,,\,\, \gamma_t \geq \frac{\kappa}{2MN^2}\quad ,\,\forall x \in V_N\,,\quad 1 \leq \pi_t(x) \leq M u(x)\,.
$$
Theorem \ref{PoincaréthmMer} yields that there exists a constant $A$ independent of $N,M$ such that:
$$\forall \eta \in (0,1)\,,\quad T_{\text{mer}}(\eta)\le  A M N^2\left(\log(N)+\log(M)+\log(1/\eta)\right) \,.$$
\end{example}
\begin{remark}
Let $\hat{K}$ be a Markov transition operator on $V_N$ associated with a set of conductances $\hat{c}$ on $E_N$ satisfying:
$$  \forall (x,y) \in E_n\,,\quad  c_0(x,y) \leq \hat{c}(x,y) \leq M/2 \,. $$
Denote $\hat{\pi}$ the invariant probability of $\hat{K}$ and let $\hat{\mu}_t^{x}$ be the law of the chain at time $t$ started at $x$ and driven by $\hat{K}^t$. Theorem \ref{homogeneP} gives that there exists a constant $B$ independent of $N, M$ such that:
$$\forall \eta \in (0,1)\,,\quad T_{\text{mer}}(\eta)\le B M N^2\left(\log(N)+\log(1/\eta)\right) \,.$$
Thus, we obtain an estimate of the merging time similar to the one in the time-homogeneous case. The only difference is an extra $\log(M)$. Furthermore, an argument based on a Nash inequality in the time-homogeneous case allows us to refine this bound. We will do so for this example in the time-inhomogeneous case, see Example \ref{exampleboite}.
\end{remark}
\begin{example}[Example \ref{stick} with unbounded conductances]\label{unbounded case 1}
We keep the same notation as in Example \ref{stick}. We still assume that:
\begin{itemize} \item for all $ t \geq 1,\, c_t \geq c_0$,
\item and for all $ (x, y) \in E_N$, the sequence $(c_t(x,y))_{t \geq 1}$ is non-decreasing. \end{itemize}
For $t\geq 1$, let $\phi(t)=\underset{x\in V_N}{\sup}\{\pi_t(x)\}$. We consider the case:
\begin{equation}\label{condinfini}
\underset{t \ge 1}{\sup}\{ \phi(t)\} =  +\infty  \,. 
\end{equation}
Fix $\eta \in (0,1)$. Imitating the calculations of Example \ref{stick}, we find that there exists $\hat{\kappa}>0$ so that if $t\geq 1$ satisfies: \begin{equation}\label{to solve}
t \geq  \hat{\kappa} \phi(t) N^2\left(\log(\phi(t))+\log(N)+\log(1/\eta)\right)\,.
\end{equation}
Then, $$ T_{\text{mer}}(\eta) \leq t \,.$$
We distinguish two cases on $\phi$. First, we assume that:
$$\forall t\geq 1\,,\quad \phi(t)\geq t\,.$$
then Equation (\ref{to solve}) has no solution and in this scenario, merging may not occur, see Example \ref{exmplnomerg}. Now, let $\alpha \in (0,1)$ and assume that:
$$\forall t\geq 1\,,\quad \phi(t)=t^{\alpha}\,.$$ 
Here, despite Condition (\ref{condinfini}), we can control the merging, Equation (\ref{to solve}) becomes:
$$
t^{(1-\alpha)} \ge \hat{\kappa} N^2\left(\alpha\log(t)+\log(N)+\log(1/\eta)\right)\,.
$$
Then, it exists a constant $A:=A(\alpha)$ independent of $N$ such that: 
\begin{equation*}
     T_{\text{mer}}(\eta) \leq A\left( N^2\left((\alpha+1)\log(N)+\log(1/\eta)\right)\right)^{1/(1-\alpha)}\,.
\end{equation*}
We obtain a polynomial bound on the merging time.
\end{example}
\begin{question}
Can one build an example on $V_N$ with non-decreasing sets of conductances where $\phi(t)=t^{\alpha}$ and prove that the bound $ T_{\text{mer}}(\eta) = B(\alpha) N^{2/(1-\alpha)}(1+\log(1/\eta))$ is sharp?
\end{question}
\begin{example}\label{exmplnomerg}
Let $V =\{0\,,\,1\}$ with 
$\{\{1\,,\,0\}\,,\,\{0\,,\,1\}\,,\,\{1\,,\,1\}\,,\,\{0\,,\,0\}\}$ as edges. Fix $\epsilon>0$ and define the set of conductances $(c_t)_{t\geq 1}$ by:
$$
\forall t\ge 1\,, \quad c_t(0,0)=c_t(1,1)= t^{1+\epsilon} \quad\text{and}\quad c_t(1,0)=c_t(0,1)=1\,.
$$
For $t \geq 1$, let $P_t$ be the Markov transition operator on $V$ associated to the set of conductances $c_t$ and let $\pi_t$ be its reversible measure, see Definition \ref{ElectricNetwork}. We have:
$$
\forall t\ge 1, \quad P_t(0,0)=P_t(1,1)= \frac{t^{1+\epsilon}}{t^{1+\epsilon}+1} \quad \text{and}\quad P_t(1,0)=P_t(0,1)=\frac{1}{t^{1+\epsilon}+1}\,.
$$
And,
$$\forall t\ge 1, \quad \pi_t(1)= \pi_t(0)=  t^{1+\epsilon}+1 \,.$$
Note that $\{(P_t,\pi_t)\}_{t\ge 1}$ is a finite non decreasing environment. And, the sequence $(\Tilde{\pi}_t)_{t\geq 1}$ is constant equal to the uniform probability distribution over \(V\).\\
For $x\in V$, denote $(X_t^x)_{t\ge 1}$ the  Markov process $V$-valued driven by $(P_t)_{t\ge 1}$ defined by:
$$
X_0^x =x \quad \text{and}\quad \forall t\ge 0\,,\,\forall z \in V\,, \quad \mathbb{P}(X_{t+1}^x= z\mid  X_t^x)= P_{t+1}(X_t^x,z)\,.
$$
Remark that for all $x\in V$, we have:
$$
 \forall t\geq 1\,,\quad \log\left(\mathbb{P}(\forall s \in [[0,t]]\,,\, X_{s}^x= x \mid  X_0^x=x)\right)= \sum_{s=1}^{t} \log\left(1-\frac{1}{s^{1+\epsilon}+1}\right)\,. 
$$
Thus, we get:
$$
\forall x \in V\,,\quad \mathbb{P}(\forall s \geq 0\,,\, X_{s}^x= x\mid  X_0^x=x) >0 \,.
$$
Finally, there is no merging.
\end{example}

We give an example with directed cycles. For more details on Markov chains on graphs with cycles, we refer to \cite{Cycles} and \cite{Math}. It can not be reduced to an example with conductances.
\begin{example}[Cycles in $\mathbb{Z}^2$]\label{cyclesexamples}
Fix $N\geq 1$, let $V_N^2$ be $(\mathbb{Z}/N\mathbb{Z})^2$ and $E_N=\left\{(x,y) \mid |x-y|\leq 1\right\} $ be the set of edges. We define a set of conductances $c_0$ by:
$$ \forall (x,y)\in E_N\,,\quad c_0(x,y) = 1_{x\ne y} \,. $$
We recall that a directed cycle in the graph \( (V_N^2, E_N) \) of size \( k \geq 2\) is a finite sequence of vertices \( (v_1, v_2, \dots, v_k) \) such that:
\begin{itemize}
    \item each pair \( (v_i, v_{i+1}) \) is an edge in \( E_N \), with the convention \( v_{k+1} = v_1 \),
    \item the vertices $v_1,\dots v_{k}$ are distinct. 
\end{itemize}
Let \( C_4 \) be the set of all possible directed cycles of size 4 in \( (V_N^2, E_N) \). Note that any vertex in \( V_N^2 \) is part of 8 distinct directed cycles in $C_4$. Assume that a sequence of sets of weights \( (s_t)_{t\geq 1} := ((s_t(\sigma))_{\sigma \in C_4})_{t\geq 1} \) is given such that  for  any $ \sigma \in C_4\,$:
\begin{itemize} \item for all $ t \geq 1,$ the quantity $s_t(\sigma)$ is non-negative,
\item the sequence $(s_t(\sigma))_{t \geq 1}$ is non-decreasing. \end{itemize}
We also assume that there exists $M>0$ such that:
$$ \forall t\geq 1\,,\, \forall x\in V_N^2\,, \quad  \sum_{\sigma \in C_4\,,\, x\in\sigma} s_t(\sigma) \le M < +\infty \,.$$
For \( t \geq 1 \), we define \( K_t \) as a Markov transition operator for \( (x,y) \in V_N^2 \) by:
\[
K_t(x,y) =  
\frac{c_0(x,y) + \sum_{\sigma \in C_4\,,\, (x,y) \in \sigma} s_t(\sigma)}{2\left( \sum_{z\in V_N^2} c_0(x,z) + \sum_{\sigma \in C_4\,,\, (x,z) \in \sigma} s_t(\sigma)\right)  } \, \text{ if } x \ne y\quad \text{and}\quad K_t(x,x)=\frac{1}{2}\,.
\]
Here $(x,y)$ is a directed edge. We define a positive measure $\pi_t$ by:
$$\forall x\in V_N^2\,,\quad \pi_t(x)=  \sum_{z\in V_N^2} c_0(x,z) + \sum_{\sigma \in C_4\,,\, (x,z) \in \sigma} s_t(\sigma)\,. $$
Note that \( \pi_t \) is an invariant measure for \( K_t \). Let $P$ be the Markov transition operator associated to the set of conductances $c_0$, see Definition \ref{ElectricNetwork} and denote by \( u \) the measure that is constantly equal to 1 on \( V_N^2 \). We recall that $uP=u$. We define the following Poincaré constants:
\[
\gamma_u = \inf_{\substack{f \in \ell^2(u) \\ f \text{ non-constant}}} \left\{ \frac{\mathcal{E}_{P,\tilde{u}(f)}}{\text{Var}_{\tilde{u}}(f)}\right\} \quad \text{and} \quad \forall t \geq 1\,,\quad  \gamma_t = \inf_{\substack{f \in \ell^2(\Tilde{\pi}_t) \\ f \text{ non-constant}}}\left\{  \frac{\mathcal{E}_{K_t^*K_t,\Tilde{\pi}_t}(f)}{\text{Var}_{\Tilde{\pi}_t}(f)} \right\}\,.
\]
Once again, we use:
$$
\forall f \in \ell^2(\Tilde{\pi}_t)\,,\quad \mathcal{E}_{P,u}(f, f)
\leq  2\mathcal{E}_{K_t,\pi_t}(f, f) \leq 4\mathcal{E}_{K_t^*K_t,\pi_t}(f, f)\,.
$$
Moreover, using the minimality of the variance and $\pi_t \leq Mu$ successively, we obtain:
$$ 
\forall f \in \ell^2(\Tilde{\pi}_t)\,,\quad \pi_t(V_N^2) \operatorname{Var}_{\tilde{\pi}_t}(f) \leq  M u(V_N^2) \operatorname{Var}_{\tilde{u}}(f)\,.
$$
We finally find that:
\begin{equation*}
    \forall t \ge 1\,,\quad
\gamma_t \geq  \frac{1}{2M}\gamma_u \,.
\end{equation*}
We recall that, see \cite{Dia}: 
$$\exists \kappa >0\,,\quad \gamma_u\geq \frac{\kappa}{N^2}\,.$$
\textbf{We now give a bound on the merging time .}\\~\\
Let \( T_{\text{mer}} \) be the merging time of the time-inhomogeneous Markov chain driven by the sequence $(K_t)_{t\ge 1}$. We summarize, for all $x\in V_N^2$, the sequence $(\pi_t(x))$ is non-decreasing and:
$$
\forall t\geq 1\,,\,\, \gamma_t \geq \frac{\kappa}{2MN^2}\quad ,\,\forall x \in V_N^2\,,\quad 1 \leq \pi_t(x) \leq M u(x)\,.
$$
Theorem \ref{PoincaréthmMer} yields that there exists a constant $A$ independent of $N,M$ such that:
$$\forall \eta \in (0,1)\,,\quad T_{\text{mer}}(\eta)\le  A M N^2\left(\log(N)+\log(M)+\log(1/\eta)\right) \,.$$
\end{example}
\begin{remark}
    In Example \ref{cyclesexamples}, when two cycles involving the same vertex have different weights at time $t\geq 1$, the measure $\pi_t$ is not reversible with respect to the operator $K_t$, although it remains invariant. Consequently, Example \ref{cyclesexamples} does not constitute an example that involves conductances.
\end{remark}

We present an example with non-monotonic conductances. We study the merging time of a time-inhomogeneous Markov chain which interpolates between two simple random walks on the same set.
\begin{example}[From a simple random walk to another]\label{fromtoanother}
Let $V$ be a finite set. Let $E_1$ and $E_2$ be two distinct sets of edges on $V$. Assume that there exists an edge in \(E_1\) but not in \(E_2\), and vice versa. Define two sets of conductances $c(1)$ and $c(2)$ by respectively:
$$ \forall (x\,,y)\in V \times V\,,\quad c(1)(x,y)=\mathbf{1}_{(x\,,y)\in E_1}\quad \text{and} \quad c(2)(x,y)=\mathbf{1}_{(x\,,y)\in E_2}\,. $$
Let \( P(1) \) and $P(2)$ be the Markov transition operator associated with the set of conductances \( c(1) \) and respectively $c(2)$. Assume that the operators $P(1)$ and $P(2)$ are irreducible on $V$. And, let \( u(1) \) be the reversible measure of $ P(1)$ and \( u(2) \), the one of $P(2)$, see Definition \ref{ElectricNetwork}. We assume that the operators $P(1)$ and $P(2)$ satisfy a Poincar\'e condition:
$$\gamma(P(1)) >0 \quad \text{and}\quad \gamma(P(2)) >0\,.  $$ \begin{figure}[H]
    \centering
    \includegraphics[scale=0.7]{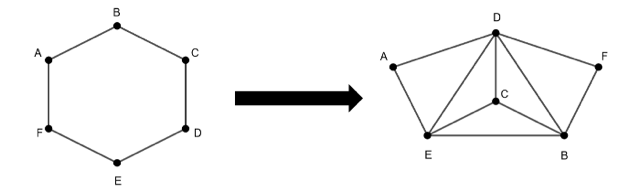}
    \caption{A $6$ vertices example}
    \label{fig: fromto}
\end{figure}
\noindent
Let us consider the sequence of sets of conductances $(c_t)_{t\geq 1}$ defined by:
$$
\forall t\geq 1\,,\quad\forall (x\,,y)\in V \times V\,,\quad c_t(x,y) = \frac{1}{t}c(1)(x,y) + \left(1-\frac{1}{t}\right)c(2)(x,y)\,.
$$
For $t \geq 1$, let $P_t$ be the Markov transition operator on $V$ associated to the set of conductances $c_t$ and let $\pi_t$ be its reversible measure. We study the Markov transition operators $(K_t)_{t\ge 1}$ defined by:
$$\forall t\ge 1\,,\quad K_t =\frac{1}{2}(I+P_t)\,. $$
For $t\geq 1$, the operators $P_t$ and $K_t$ are reversible with respect to $\pi_t$. Let $\gamma_t$ be the Poincar\'e constant $\gamma(K_t^*K_t)$.\\~\\
\textbf{We assume that $P(1)$ and $P(2)$ have constant degree $d_1$ and $d_2$ respectively:} 
$$\forall x\in V\,,\quad \text{card}\left(\{ z  \in V\mid P(1)(x,z)>0\}\right) =d_1 \quad \text{and}\quad \text{card}\left(\{ z  \in V\mid P(2)(x,z)>0\}\right) =d_2\,. $$
Let \(\tilde{u}\) be the uniform probability distribution on \(V\). We get:
$$\tilde{u}(1)=\tilde{u}(2)=\tilde{u} \quad \text{and} \quad \forall t\geq 1\,,\quad \tilde{\pi}_t=\tilde{u}\,. $$
For all $t\geq 1$, the $K_t$'s share the same invariant probability. Therefore, the Markov chain driven by $(K_{t})_{t\geq 1}$ is much simpler to study. More precisely, mixing and merging for this chain coincides and one can apply Theorem \ref{homogeneP}. We estimate the Poincar\'e constant of $K_t^*K_t$. Note that for all $t\geq 1$,
$$ \pi_t =\frac{1}{t}u(1)+\left(1-\frac{1}{t}\right)u(2)\,\, \text{and}\,\, \gamma_t = \inf\left\{  \frac{\mathcal{E}_{K_t^*K_t,\pi_t}(f)}{\text{Var}_{\pi_t}(f)}\mid  f \in \ell^2(\Tilde{\pi}_t) \,,\, f \text{ non-constant}\right\}\,.  $$
For the definition of $\text{Var}_{\pi_t}$, we refer to Equation \ref{defintionvariancenonproba}.\\
A straightforward comparison argument gives:
$$
\forall t\geq 1\,,\quad \gamma_t \geq \gamma_*:= \frac{1}{2}\min\left(\gamma(P(1))\,,\,\gamma(P(2))\right)\,.
$$
For all $x\in V$, let $\mu_t^x$ be the law at time $t$ of the chain started at $x$ and driven by $(K_t)_{t\geq1}$. Theorem \ref{homogeneP} gives:
\begin{equation*}
    \forall x\in V\,,\,\forall t \geq 1\,,\quad d_{TV}( \mu_t^x ,\tilde{u} )\le\frac{1}{\sqrt{\text{card}(V)}}(1-\gamma_*)^{t/2} \,. 
\end{equation*}
\textbf{We do not assume that $P(1)$ and $P(2)$ have constant degree.} We still perform a convex interpolation. We assume that:
$$ \forall x \in V\,,\quad  u(2)(x)\geq u(1)(x)\,.$$
Note that :
$$ \forall t\geq 1\,,\quad \pi_t= u(2) + \frac{1}{t}(u(1)-u(2))\,.$$
Then, for all $x\in V$, the sequence $( \pi_t(x))_{t\geq 1}$ is non-decreasing and once again, a comparison argument gives:
$$
\forall t \ge 1\,,\quad \gamma_t \geq \gamma_*\,.
$$
Besides, the set of conductances $(c_t)_{t\geq 1}$ is not monotone. Indeed, if \((x, y) \in V\) satisfies \(c(1)(x, y) \neq 0\) and \(c(2)(x, y) = 1\), then \((c_t(x, y))_{t \geq 1}\) is non-decreasing. On the other hand, if \((x, y) \in V\) satisfies \(c(2)(x, y) \neq 0\) and \(c(1)(x, y) = 0\), then \((c_t(x, y))_{t \geq 1}\) is non-increasing.\\~\\
Let $A>0$ so that, for all $x \in V$, we have $\tilde{u}(1)(x) \geq  \frac{A}{\text{card}(V)}$. Let \( T_{\text{mer}} \) denote the merging time of the time-inhomogeneous Markov chain driven by the sequence $(K_t)_{t\ge 1}$. An application of Theorem \ref{PoincaréthmMer} yields:\\
$$\forall \eta \in (0,1)\,,\quad T_{\text{mer}}(\eta)\le  \frac{1}{\gamma_*} \left(\log(A)+\log(\text{card}(V))+\log\left(\frac{u(2)(V)}{u(1)(V)}\right)+2\log(1/\eta)\right) \,.$$
\end{example}
We provide an example on an infinite set.
\begin{example}[A birth and death process on $\mathbb{N}$ with extra connections to $0$]
Let $V=\mathbb{N}$. We define the set of edges as $E= \{ (x,y)\mid |x-y| \le 1\} \cup  \cup_{n \geq 1} (0,n) $. Assume that $P$ a positive recurrent, irreducible and aperiodic Markov transition operator on $V$ and $\tilde{u}$ its invariant probability measure are given and satisfy:
$$ \gamma(P)>0\,,\quad\tilde{u}(0)= \sup\left\{\tilde{u}(x) \mid x\in V \right\}\quad \text{and}\quad\forall (x,y)\in V\,,\, |x-y|>1 \implies P(x,y)=0    \,.$$
Let $c_{0}$ be a set of conductance associated to $P$, see Remark \ref{reciproquedefonduct}. In the sequel, we are given:
\begin{itemize}
    \item a sequence of positive integers $(n_t)_{t\ge1}$,
    \item and a sequence of positive reals $(w_t)_{t\ge 1}$ .
\end{itemize} 
Roughly speaking, we want to consider the time-inhomogeneous process defined as follows: at each time $t \ge 1$, we add an undirected edge from $0$ to $n_t$ with a weight proportional to $w_t$.
\begin{figure}[H]
    \centering
    \includegraphics[scale=0.5]{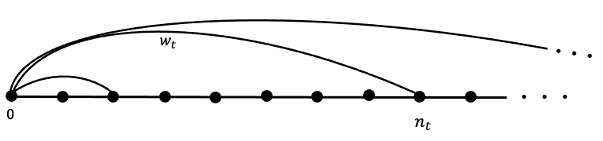}
    \caption{Extra connection with $0$}
    \label{fig: connection}
\end{figure}
\noindent
We build $(c_t)_{t\geq 0}$ by induction a family of non-decreasing sets of conductance where $c_0$ is the one associated to $P$ and for $t\geq 1$, if $c_t$ is given, we define $c_{t+1}$ by:
$$ \forall (x,y)\in V\,,\quad c_{t+1}(x,y)= c_t(x,y) +\Tilde{u}(n_{t+1})w_{t+1}\left(\mathbf{1}_{x=n_{t+1}\,,\,y=0} +\mathbf{1}_{y=n_{t+1}\,,\,x=0} \right)\,. $$
For each $t\geq 1$, let \( P_t \) be the Markov transition operator associated with the set of conductance \( c_t \) and $\pi_t$ be its reversible measure, see Definition \ref{ElectricNetwork}. We study the Markov transition operators $(K_t)_{t\ge 1}$ defined by:
$$\forall t\ge 1\,,\quad K_t =\frac{1}{2}(I+P_t)\,. $$
Let $\gamma_t$ be the Poincar\'e constant $\gamma(K_t^*K_t)$. Note that for each $t\geq 1$, \(\pi_t\) is a reversible measure for \(K_t\) and, for all $x\in V$, the sequence $(\pi_t(x))_{t\geq 1}$ is non-decreasing. Then, the sequence $\{(K_t,\pi_t)\}_{t\ge 1}$ is a finite non-decreasing environment.   \\~\\
We first deal with a \textbf{bounded case}, we assume that: $$M:=\sum_{t\ge 1}w_t   <+\infty \,.$$
It is straight forward that:
$$\forall t\ge 0\,,\forall x\in V\,,\quad \tilde{u}(x) \le\pi_t(x) \le \frac{M + 1}{\tilde{u}(0)}\tilde{u}(x)\,.$$
A comparison argument gives:
$$
\forall t\ge 1\,,\quad\gamma_t \geq  \frac{\tilde{u}(0)}{2(M+1)}\gamma(P)\,.
$$
For any $x\in V$, for each $t\geq 0$, let $\mu_t^x$ be the law of the chain started at $x$ at time $t$ and driven by $(K_{t})_{t\geq1}$. Note that for all $t\geq 1$, we have $\pi_t(V) \le  \frac{M+1}{\tilde{u}(0)}$. An application of Theorem \ref{PoincaréthmMer} yields that for all $x,y$ in $V$,
\begin{equation*}
	\forall t\geq 1\,,\quad d_{TV}( \mu_t^x ,\mu_t^y)\le\frac{1}{2}\sqrt{ \frac{M+1}{\tilde{u}(0)}} \left(\frac{1}{\sqrt{\Tilde{u}(x)}}+\frac{1}{\sqrt{\Tilde{u}(y)}}\right)\left(1-\frac{\tilde{u}(0)}{2(M+1)}\gamma(P)\right)^{t/2}\,.
\end{equation*}
And, we get for all $\eta \in (0,1)$,
$$ T_{\text{mer}}(x,y,\eta) \le \frac{2(M+1)}{\gamma(P)\tilde{u}(0)}\left(\log\left(\frac{M+1}{\tilde{u}(0)}\right)+2\log\left(\frac{1}{\sqrt{\Tilde{u}(x)}}+\frac{1}{\sqrt{\Tilde{u}(y)}}\right)+2\log(1/\eta)\right)\,. $$
We now deal with the \textbf{unbounded case}. We assume that:
\begin{equation}\label{poidsinfini}
    \sum_{t\ge 1}w_t    = +\infty \,.
\end{equation}
For $t\geq 1$, let $M_t$ be $\frac{1+\sum_{r= 1}^tw_r}{\tilde{u}(0)}$. The same calculations as in the \textbf{bounded case} provides:
$$\forall s \in [[1,t]]\,,\quad 
\gamma_s \geq  \frac{1}{2M_t}\gamma(P)\quad \text{and}\quad\pi_s(V) \le M_t\,.
$$
Let $x ,y$ in $V$ and $\eta \in (0,1)$. Note that if $t\geq 1$ is a solution of the following:
\begin{equation}\label{to solve2}
M_t\exp\left(\frac{-t\gamma(P)}{2 M_t}\right) \leq \frac{\eta^2}{\left(\frac{1}{\sqrt{\Tilde{u}(x)}}+\frac{1}{\sqrt{\Tilde{u}(y)}}\right)^2} \,,
\end{equation}
then, Theorem \ref{PoincaréthmMer} gives $T_{\text{mer}}(x,y,\eta) \leq t$. If we have:
$$
\underset{t\rightarrow +\infty}{\liminf} \, \frac{M_t\log(M_t)}{t} = 0 \,,
$$
then, Equation (\ref{to solve2}) has a solution for any choice of $x$, $y$, and $\eta$.\\~\\
We now give \textbf{an always merging condition}. Define $\hat{\gamma}$ the first Dirichlet eigenvalue of $P$ by:
$$
\hat{\gamma}=\inf \left\{\frac{\mathcal{E}_{ P,\tilde{u}}\left(f, f\right)}{\|f\|_{\ell^2(\tilde{u})}^2}\mid f \in  \ell^2(\tilde{u})\,,\, f(0)=0\quad \text{and}\quad f\text { non-constant }  \right\} .
$$
We assume that $\hat{\gamma}>0$, it is sufficient to yield merging in total variation and to find bounds on the merging-time. Below, see Inequation (\ref{donneunnom}). We provide an estimate of $T_{\text{mer}}$ that holds for any choice of $(n_t)_{t\geq 1}$ and $(w_t)_{t \geq 1}$. This gives an estimate of the merging-time even if Condition (\ref{poidsinfini}) is satisfied. This set of examples includes situations when the Markov transition operators $(K_t)_{t\geq 1}$ do not have a limit and merging in total variation still occurs.\\~\\
First, use $\|f - f(0)\|_{\ell^2(\tilde{u})}^2 \geq \text{Var}_{\tilde{u}}(f)$ to find:
$$
 \text{Var}_{\tilde{u}}(f) \leq \|f-f(0)\|_{\ell^2(\tilde{u})}^2
  \leq \frac{1}{\hat{\gamma}}\mathcal{E}_{P,\tilde{u}}(f-f(0),f-f(0)) = \frac{1}{\hat{\gamma}}\mathcal{E}_{P,\tilde{u}}(f,f) \,,
$$
and, we get:
$$
\gamma(P) \geq \hat{\gamma}\,.
$$
For each $t\geq 1$, let $\hat{\gamma}_t$ be the first Dirichlet eigenvalue of $K_t^*K_t =K_t^2$ defined by:
$$
\hat{\gamma}_t=\inf \left\{\frac{\mathcal{E}_{K_t^2,\pi_t}\left(f, f\right)}{\|f\|_{\ell^2(\pi_t)}^2}\mid f \in  \ell^2(\pi_t)\,,\, f(0)=0\quad \text{and} \quad f\text { non-constant } \right\} .
$$
Note that for all $t\geq 1$,
$$
\begin{aligned}
   \|f\|_{\ell^2(\pi_t)}^2&= \|f\|_{\ell^2(\tilde{u})}^2+\sum_{s=1}^t f^2(n_s)w_s(n_s,0)\\
   &\leq  \frac{1}{\hat{\gamma}}\mathcal{E}_{P,\tilde{u}}\left(f, f\right)+ \sum_{s=1}^t \left(f(n_s)-f(0)\right)^2w_s(n_s,0)\\
   &\leq \left(\frac{2}{\hat{\gamma}}+1\right)\mathcal{E}_{K_t^2,\pi_t}(f,f)\,.
\end{aligned}
$$
Let $\gamma:= \frac{1}{(\frac{2}{\hat{\gamma}}+1)}$. We find a uniform lower bound in time on the $\gamma_t$'s:
$$
\forall t\geq 1\,,\quad \gamma_t \geq \gamma\,.
$$
Finally, Theorem \ref{PoincaréthmMer} yields that merging in total variation occurs and for all $x,y$ in $V$,
\begin{equation}\label{donneunnom}
\forall \eta \in (0,1)\,,\quad T_{\text{mer}}(x,y,\eta) \leq \frac{1}{\gamma}\left(\log(1/\eta)+\log\left( \frac{1}{\sqrt{\tilde{u}(x)}}+ \frac{1}{\sqrt{\tilde{u}(y)}}\right)\right)\,.    
\end{equation}
\end{example}
We now illustrate Theorem \ref{MergingNash}.
\begin{example}[Nash Inequalities for a box in $\mathbb{Z}^d$]\label{exampleboite}
Let $N\geq 1$, $d\geq 1$ and $V_N^d$ be $(\mathbb{Z}/N\mathbb{Z})^d$ and let the set of edges $E_N^d$ be $(\mathbb{Z}/N\mathbb{Z})^d\times(\mathbb{Z}/N\mathbb{Z})^d$. Let $c_0$ be the set of conductance defined by:
$$ \forall (x,y)\in (\mathbb{Z}/N\mathbb{Z})^d\,,\quad c_0(x,y) =\mathbf{1}_{|x-y|=1}\,. $$
Let $(c_t)_{t\ge 1}$ be a sequence of set of conductances on $E_N^d$. Assume that:
\begin{itemize} \item for all $ t \geq 1,\, c_t \geq c_0$ and for all $ (x, y) \in E_N^d,\, c_t(x,y) \geq \mathbf{1}_{c_t(x,y) \neq 0}$, \item the sequence $(c_t(x,y))_{t \geq 1}$ is non-decreasing. \end{itemize}
And, we assume that there exists $M>0$ such that:
$$ \forall t\geq 1\,,\, \forall x\in (\mathbb{Z}/N\mathbb{Z})^d\,, \quad  \sum_{y\in V_N^d} c_t(x,y) \le M < +\infty \,.$$
\begin{figure}[H]
    \centering
    \includegraphics[scale=0.5]{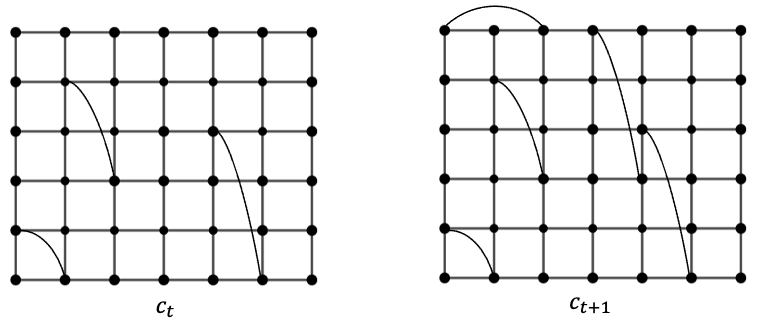}
    \caption{An example in $\mathbb{Z}^2$}
    \label{fig: nash}
\end{figure}
\noindent
For $t \geq 1$, let $P_t$ be the Markov transition operator on $V_N^d$ associated to the set of conductances $c_t$ and let $\pi_t$ be its reversible measure, see Definition \ref{ElectricNetwork}. We study the Markov transition operators $(K_t)_{t\ge 1}$ defined by:
$$\forall t\ge 1\,,\quad K_t =\frac{1}{2}(I+P_t)\,. $$
For each $t\geq 1$, the operators $P_t$ and $K_t$ are reversible with respect to $\pi_t$. Let $Q_t$ be $K^*_tK_t=K_t^2$ and $\gamma_t$ be the Poincar\'e constant $\gamma(Q_t)$.\\~\\
We prove a \textbf{Nash inequality at each time} $t\geq 1$. Let $P$ be the Markov transition operator on $V_N^d$ associated to the set of conductances $c_0$ and let $u$ be its reversible measure. Let $T= dN^2$, it is well known that there exists $\kappa >0$ such that the pair $(P,\Tilde{u})$ satisfies a $\mathcal{N}(\kappa T,d/4,T)$ Nash inequality, see \cite{Dia}.\\~\\
Note that:
$$\forall t \ge 1\,,\quad  2dN^d\le \pi_t(V_N^d) \le N^dM \,. $$
And, we have: 
\begin{equation*}
\forall t \ge 1\,,\quad \frac{2d}{M}\tilde{u} \le \Tilde{\pi}_t \le \frac{M}{2d} \tilde{u} \,.
\end{equation*}
Furthermore,
\[
\begin{aligned}
\forall f : V_N^d \rightarrow \mathbb{R}\,,\, \forall t\geq 1\,,\quad  &\|f\|_{\ell^1(\tilde{u})}^{4/d} \le  \left(\frac{M}{2d}\right)^{4/d} \|f\|_{\ell^1(\Tilde{\pi}_t)}^{4/d}\,, \\
\text{and}\quad &\frac{2d}{M}\|f\|_{\ell^2(\Tilde{\pi}_t)}^2 \le \|f\|_{\ell^2(\tilde{u})}^2 \le \frac{M}{2d} \|f\|_{\ell^2(\Tilde{\pi}_t)}^2\,, \\
\text{and}\quad&\mathcal{E}_{P, u}(f, f) \le \frac{M}{d} \mathcal{E}_{Q_t, \Tilde{\pi}_t}(f, f)\,.
\end{aligned}
\]
Then, using that the pair $(P,\tilde{u})$ satisfies the Nash inequality $\mathcal{N}(\kappa T,d/4,T)$ we find:
$$
\begin{aligned}
    \|f\|_{\ell^2(\Tilde{\pi}_t)}^{2+4/d} &\leq \left(\frac{M}{2d}\right)^{1+2/d} \|f\|_{\ell^2(\tilde{u})}^{2+4/d}\\
    &\leq \kappa T \left(\frac{M}{2d}\right)^{1+2/d}\left( \mathcal{E}_{P, u}(f, f)  +\frac{1}{T} \|f\|_{\ell^2(\tilde{u})}\right) \|f\|_{\ell^1(\tilde{u})}^{4/d}\\
    & \leq \kappa T \left(\frac{M}{2d}\right)^{1+6/d}\left(\frac{M}{d} \mathcal{E}_{Q_t, \Tilde{\pi}_t}(f, f) +\frac{M}{2d}\frac{1}{T}\|f\|_{\ell^2(\Tilde{\pi}_t)}^2  \right)\|f\|_{\ell^1(\Tilde{\pi}_t)}^{4/d}\\
    & \leq 2\kappa T\left(\frac{M}{2d}\right)^{2+6/d}\left( \mathcal{E}_{Q_t, \Tilde{\pi}_t}(f, f) +\frac{1}{T}\|f\|_{\ell^2(\Tilde{\pi}_t)}^2  \right)\|f\|_{\ell^1(\Tilde{\pi}_t)}^{4/d}\,.
\end{aligned}
$$
Finally, let  $\kappa' =2\kappa \left(\frac{M}{2d}\right)^{2+6/d}$ and use a comparison method to prove that the pair $(K_t^2,\Tilde{\pi}_t)$ satisfies the Nash inequality $\mathcal{N}(\kappa' T,d/4,T)$. We get:
\begin{equation*}\label{Nashddim3}
\forall t \ge 1\,,\,\forall f: V_N^d \rightarrow \mathbb{R}\,,\quad \|f\|_{\ell^2(\Tilde{\pi}_t)}^{2+4/d} \leq \kappa' T\left(\mathcal{E}_{K_t^2, \Tilde{\pi}_t}(f, f)+\frac{1}{ T}\|f\|_{\ell^2(\Tilde{\pi}_t)}^2\right)\|f\|_{\ell^1(\Tilde{\pi}_t)}^{4 / d} \,.  
\end{equation*}
We recall that a comparison method gives:
$$\forall t \ge 1\,,\quad \gamma_t \geq  \frac{d}{2M}\gamma(P)\,.$$
It is well known that, see \cite{Dia}: 
$$\exists a >0\,,\quad \gamma(P)=\frac{a}{dN^2}\,.$$
We conclude with:
$$\forall t \ge 1\,,\quad     \gamma_t \geq  \frac{a}{MN^2}\,. $$
We now \textbf{bound the merging time}. For any $t\geq 1$, for any $z\in V_N^d$, denote with $\mu_t^z$ the law of the chain at time $t$ started at $z$ and driven by $(K_{t})_{t\geq1}$. An application of Theorem \ref{MergingNash} yields that there exists a constant $\alpha$ independent of $N$ and $d$ such that for all $\eta \in (0,1)\,,$
$$ \max\left\{ T_{\text{mer}}( x,y,\eta ) \mid (x,y)\in V\right\} \le   \alpha dMN^2\left(\log(1/\eta)+\log(M)\right) \,.$$
And, there exists a constant $\hat{\alpha}$ independent $N$ and $d$ such that:
$$\forall \eta \in (0,1)\,,\quad  T_{mer}^{\infty}( \eta \,, (\pi_t)_{t\geq 1} ) \le   \hat{\alpha}dMN^2\left(\log(1/\eta)+\log(M)\right) \,.$$
\end{example}
the Markov transition operator $K_t$ converges to a Markov transition operator with conductance. We may compare the previous result with the time-homogeneous situation. Let $\hat{K}$ be a Markov transition operator associated with a set of conductance $\hat{c}$ satisfying $1\le \hat{c} \le M/N^d$. Denote by $\hat{\pi}$, the invariant probability of $\hat{K}$, and $\hat{\mu}_t^{x}$, the law of the chain at time $t$ started at $x$ and driven by $\hat{K}^t$. A Nash inequality argument gives the following bound on the mixing time of the chain driven by $\hat{K}$:
$$\exists \tilde{\alpha}> 0\,,\forall \eta \in (0 ,1/2)\,,\quad T_{\text{mix}}^{\infty}(\eta)\le  \tilde{\alpha} M N^2\left(d\log(M)+\log(1/\eta)\right)\,.$$
Then, the estimates on $T_{\text{mix}}^{\infty}$ and on the merging time are similar. The only differences are an additional mass term and in the constants. Besides, the decay factor provided by Nash inequalities cancels out a volume factor in front, thereby providing the correct order of the bound.

\newpage 
We now illustrate Theorem \ref{LogSobolevThm} and  Theorem \ref{LogSobolevThm2}.
\begin{example}[Hypercube with conductance]\label{hypercube}
Let $N\geq 1$ and $V^N$ be $\{0,1\}^{N}$, define $E^N$ the set of edges by $V^N\times V^N$. Let $c_0$ be the set of conductances defined by:
$$ \forall (x,y)\in V^N\,,\quad c_0(x,y) =\mathbf{1}_{|x-y|=1}\,. $$
Let $(c_t)_{t\ge 1}$ be a sequence of set of conductances on $E^N$. Assume that:
\begin{itemize}\item for all $ t \geq 1,\, c_t \geq c_0$ and for all $ (x, y) \in E^N,\, c_t(x,y) \geq \mathbf{1}_{c_t(x,y) \neq 0}$, \item the sequence $(c_t(x,y))_{t \geq 1}$ is non-decreasing. \end{itemize}
And, we assume that there exists $M>0$ such that:
$$ \forall t\geq 1\,,\, \forall x\in V^N\,, \quad  \sum_{y\in V^N} c_t(x,y) \le M < +\infty \,.$$
For $t \geq 1$, let $P_t$ be the Markov transition operator on $V_N^d$ associated to the set of conductances $c_t$ and let $\pi_t$ be its reversible measure, see Definition \ref{ElectricNetwork}. We study the Markov transition operators $(K_t)_{t\ge 1}$ defined by:
$$\forall t\ge 1\,,\quad K_t =\frac{1}{2}(I+P_t)\,. $$
For each $t\geq 1$, the operators $P_t$ and $K_t$ are reversible with respect to $\pi_t$. Let $Q_t$ be $K^*_tK_t=K_t^2$ and $\alpha_t$ be the logarithmic Sobolev constant $\alpha(Q_t)$.
Note that: 
\begin{equation*}
\forall t \ge 1\,,\quad \frac{1}{M}\tilde{u} \le \Tilde{\pi}_t \le M\tilde{u} \,.
\end{equation*}
We \textbf{We give a lower bound on the logarithmic Sobolev constants}. Let $P$ be the Markov transition operator associated to the set of conductances $c_0$. Let $\Tilde{u}$ be its invariant probability, define $K = \frac{1}{2}(I+P)$ and $\alpha =\alpha(K)$, we have, see \cite{SF}:
\begin{equation*}
\exists \kappa >0\,,\quad \alpha = \frac{\kappa}{N}\,.\end{equation*}
For each $t\geq 1$, let $\mathcal{S}_t^N$ the set of functions defined on $V^N$ such that $\mathcal{L}(f\mid \tilde{\pi}_t)$ exists.\\
We prove that:
$$\forall t\geq 1\,,\forall f \in \mathcal{S}_t^N\,,\quad \mathcal{L}(f\mid \tilde{\pi}_t)\leq M \mathcal{L}(f\mid \tilde{u}) \,. $$
Let $t\geq 1$ and $f \in\mathcal{S}_t^N$, we have:
$$
\begin{aligned}
\mathcal{L}(f\mid \Tilde{\pi}_t)&= \inf \left\{ \sum_{x\in V} \tilde{\pi}_t(x) \left( f(x) \log\left(\frac{f(x)}{c}\right)-f(x)+c \right)   \mid c >0 \right\} \quad \text{by Equality \ref{mini2}}\,,\\
&\leq  \sum_{x\in V} \tilde{\pi}_t(x) \left( f(x) \log\left(\frac{f(x)}{\tilde{u}(f) }\right)-f(x)+ \tilde{u}(f)\right)\\
&\leq M   \sum_{x\in V} \tilde{u}(x) \left( f(x) \log\left(\frac{f(x)}{\tilde{u}(f) }\right)-f(x)+ \tilde{u}(f)\right)  \quad \text{using}\quad \Tilde{\pi}_t \le M\tilde{u}\,,\\
&=  M \mathcal{L}(f\mid \tilde{u}) \,.
\end{aligned}
$$
And, note that:
$$ \forall f:V^N \rightarrow \mathbb{R}\,, \quad 2\mathcal{E}_{K_t^2,\Tilde{\pi}_t}(f,f) \geq 2\mathcal{E}_{K_t,\Tilde{\pi}_t}(f,f) \geq \mathcal{E}_{K,\tilde{u}}(f,f)\,. $$
We finally find that the standard logarithmic Sobolev inequality for $K$ gives:
$$\forall t \ge 1\,\quad \forall f\in  \mathcal{S}_t^N\,, \quad  \frac{\mathcal{E}_{K_t^2,\Tilde{\pi}_t}(f,f)}{\mathcal{L}(f^2|\Tilde{\pi}_t)} \ge \frac{1}{2M}\alpha\quad \text{and}\quad   \alpha_t \geq \frac{1}{2M}\alpha \,. $$
\textbf{We bound the merging time}. We are in the conditions to apply Theorems \ref{LogSobolevThm} and \ref{LogSobolevThm2}. Here $K_t=K_t^*$.
Let $s=1+ \left[\frac{MN}{\kappa}\log\left(\log\left(M 2^N\right)\right)\right]$ and $q_{s}=2 (1+\frac{1}{2M}\alpha)^s$. We have:
$$q_{s}\geq 2\prod_{u=1}^s(1+\alpha_{j})\quad \text{and}\quad\log(q_s ) \ge \log(\log(M2^N)) \,.$$
For $t\geq 1$, let $\gamma_t$ be the Poincar\'e constant $\gamma(K_t^*K_t)$ and recall by Lemma \ref{comparaisga}:
$$ \forall t \ge 1\,,\quad \gamma_t\geq 2\alpha_t \quad \text{so}\quad  \gamma_t \geq \frac{\kappa}{MN} \,.$$
For any $t\geq 1$, for any $z\in V^N$, denote with $\mu_t^z$ the law of the chain at time $t$ started at $z$ and driven by $(K_{t})_{t\geq1}$. Theorem \ref{LogSobolevThm} gives that there exists a constant $A$ independent of $N$ such that for all $\eta \in (0,1)\,,$ we have:
$$ \max\left\{ T_{\text{mer}}( x,y,\eta ) \mid (x,y)\in V^N\right\} \le  AMN\left(\log(\log(M)) +\log\left(\log(N)\right) +\log(1/\eta)+\log(M) \right) \,.$$	
And, Theorem \ref{LogSobolevThm2} gives that there exists a constant $B$ independent of $N$ such that:
$$\forall \eta \in (0,1)\,,\quad  T_{\text{mer}}^{\infty}( \eta \,, (\pi_t)_{t\geq 1} ) \le  BMN\left(\log(\log(M)) +\log\left(\log(N)\right) +\log(1/\eta)+\log(M) \right) \,.$$	
\end{example}

In this example, too, the Markov transition operators \( K_t \) have a limit. We find a merging time similar to the mixing time in the time-homogeneous case, up to a factor of \( M \).
\newpage
\section{Proof}\label{proof}\subsection{Preliminaries}  
From now on, we fix $V$ as a countable set. It will be the state space for the Markov chain defined in Section \ref{Introdu}. We will sometimes need to assume that the set $V$ is finite. If this is the case, we will mention it. Let $\mathcal{P}(V)$ be the power set of $V$. Let $\mathcal{M}_{<+\infty}(V)$ be the set of finite measures on $V$ non vanishing and let $\mathcal{M}_{1}(V)$ be the subset of $\mathcal{M}_{<+\infty}(V)$ containing the probability measures on $V$. A function of two variables $K$ is said to be a Markov transition operator on $V$ when $K$ is a map $ V\times V\rightarrow [0\,,1]$ satisfying:
$$\forall x\in V\,,\quad K(x,.):\mathcal{P}(V)\rightarrow[0\,,1]\quad \text{is a probability measure on V .}$$
Let $\mathcal{F}(V,\mathbb{R}):=\left\{f : V \rightarrow \mathbb{R} \right\}$. When $K$ is a Markov transition operator on $V$, the operator $K$ acts as a right linear operator on $\mathcal{F}(V,\mathbb{R})$ by:
\begin{equation}
		\forall f \in\mathcal{F}(V,R)\,,\,\quad\forall x \in V \,,\,(Kf)(x) = \sum_{y \in V} K(x,y)f(y)\,.
\end{equation}
The operator $K$ also acts as an left linear operator on $\mathcal{M}_{<+\infty}(V)$ as follows:
\begin{equation}
\forall \mu \in\mathcal{M}_{<+\infty}(V)\,,\quad \forall y \in V\,,\, 	(\mu K)(y) = \sum_{x \in V} \mu(x)K(x,y) \,.
\end{equation}
Let $\pi \in \mathcal{M}_{<+\infty}(V)$ and $p\in [1,+\infty[$, we define $\ell^p(\pi)\subset\mathcal{F}(V,\mathbb{R})$ by:
$$ \ell^p(\pi) = \left\{ f \in \mathcal{F}(V,\mathbb{R})\mid \sum_{x\in V}\pi(x)|f(x)|^p<+\infty\right\}\,.$$
When $p=1$, we write:
$$ \forall f \in \ell^1(\pi)\,,\quad \pi(f)=\sum_{x\in V}f(x)\pi(x)\quad \text{and }\quad  \Tilde{\pi}(f)=\sum_{x\in V}f(x)\Tilde{\pi}(x)\,.$$ 
We used $\Tilde{\pi}:=\frac{\pi}{\pi(V)}$, it denotes the element of $\mathcal{P}(V)$ given by $\pi$.\\
We consider the following $p$-norm:
$$ \forall f \in \ell^p(\pi)\,,\quad \left\| f \right\| _{\ell^p(\pi)} =\left(\sum_{x\in V}\pi(x)|f(x)|^p\right)^{\frac{1}{p}}\,.$$
In the $\ell^2(\pi)$ case, we write the scalar product by:
\begin{equation*}
\forall (f,g) \in \ell^2(\pi)\,,\quad \left\langle f\mid g\right\rangle_{\pi} = \sum_{x\in V} f(x)g(x)\pi(x)\quad 
    \text{and}\quad \left\langle f\mid g\right\rangle_{\Tilde{\pi}} = \sum_{x\in V} f(x)g(x)\Tilde{\pi}(x) \,.
\end{equation*}
And, the definition of the variance of $f \in \ell^2(\pi)$ linked to $\Tilde{\pi}$ is:
$$
\operatorname{Var}_{\Tilde{\pi}} (f ) = \sum_{x\in V} f^2(x)  \Tilde{\pi}(x) -\left(\sum_{x\in V} f(x)  \Tilde{\pi}(x)\right)^2\,.$$
For \(\pi\), which is not necessarily a probability measure, we use the notation \(\operatorname{Var}_{\pi}\) to denote the following quantity:
$$
\forall f\in \ell^2(\pi)\,,\quad \operatorname{Var}_{\pi} (f )  = \pi(V)\operatorname{Var}_{\Tilde{\pi}} (f ) 
$$
Note that:
$$\begin{aligned}
    \forall f\in \ell^2(\pi)\,,\quad \text{Var}_{\pi}(f)&=\frac{1}{2\pi(V)}\sum_{x\in V}\sum_{y \in V}(f(x)-f(y))^2\pi(x)\pi(y)\\
    &=\sum_{x\in V}f^2(x)\pi(x)-\pi(V)\Tilde{\pi}(f)^2\,.
\end{aligned}$$
Classicaly, we define $\ell^{\infty}(\pi)\subset\mathcal{F}(V,\mathbb{R})$ by the set of bounded functions on $V$ and,
$$ \forall f \in\ell^{\infty}(\pi)\,,\quad  \left\| f\right\| _{\ell^{\infty}(\pi)} = \sup\left\{ \left|f(x)\right| \mid x \in V\right\}\,.$$

One of the characteristics of the time-inhomogeneous context is that the invariant measures of the operators evolve with time. So, we have to consider $K_t$ as an operator between $\ell^p$ spaces with sometimes different measures in the domain and target spaces and other times the same measures. 
\begin{definition}\label{definitionoperator}
Let $t\geq 2$, $\pi_{t-1}$ and $\pi_t$ two positive measures. Let $K_t$ be a Markov transition operator. Assume that $\pi_t$ is an invariant measure of $K_t$. We juggle between:
$$K_t: \ell^p(\pi_t) \rightarrow \ell^q(\pi_t)\quad \text{and}\quad K_t: \ell^p(\pi_t) \rightarrow \ell^q(\pi_{t-1})\,.$$
We denote $K_t^*$ the dual of $K_t:\ell^2(\Tilde{\pi}_t) \rightarrow \ell^2(\Tilde{\pi}_t)$ and  $K_t^{\Rightarrow}=K_{t-1,t}^{\Rightarrow}: \ell^2(\pi_{t-1})\rightarrow\ell^2(\pi_t)$ the dual of $K_t : \ell^2(\pi_{t})\rightarrow\ell^2(\pi_{t-1})$. We use the notation $K_t^{\rightarrow}$ for the dual of the operator $K_t$ from $\ell^2(\Tilde{\pi}_{t})$ to $\ell^2(\Tilde{\pi}_{t-1})$.
\end{definition}
\begin{proposition}
We adopt the notation of Definition \ref{definitionoperator}. We recall that:
$$\forall (x,y)\in V\,,\quad K_t^*(x,y)= \frac{\pi_t(y)}{\pi_t(x)}K_t(y,x) = \frac{\Tilde{\pi}_t(y)}{\Tilde{\pi}_t(x)}K_t(y,x) \,.$$ 
Then, the operator $K_t^*$ is also the dual of $K_t:\ell^2(\Tilde{\pi}_t) \rightarrow \ell^2(\Tilde{\pi}_t)$.
And, we have:
\begin{equation*}
	\forall f \in \ell^2(\pi_t)\,,\,\forall g \in \ell^2(\pi_{t-1})\,,\quad \left\langle K_tf\mid g\right\rangle_{\pi_{t-1}}  = \left\langle f\mid K_t^{\Rightarrow}g\right\rangle_{\pi_{t}} \,.
\end{equation*}
Finally, we get:
$$\forall (x,y)\in V\,,\quad  K_t^{\Rightarrow}(x,y)= \frac{\pi_{t-1}(y)}{\pi_{t}(x)}K_t(y,x) \,.$$
\end{proposition}

\begin{remark}
Note that in general, we can not substitute $\Tilde{\pi}_t$ to $\pi_t$ that is to say if $\pi_t(V)\ne \pi_{t-1}(V)$ then,
$$
\begin{aligned}
\forall (x,y)\in V\,,\quad K_t^{\Rightarrow}(x,y)&\ne \frac{\Tilde{\pi}_{t-1}(y)}{\Tilde{\pi}_{t}(x)}K_t(y,x) \\
\text{but}\quad K_t^{\Rightarrow}(x,y)&= \frac{\pi_{t-1}(V)}{\pi_{t}(V)}\frac{\Tilde{\pi}_{t-1}(y)}{\Tilde{\pi}_{t}(x)}K_t(y,x)\\
\text{and}\quad K_t^{\Rightarrow}(x,y)&= \frac{\pi_{t-1}(V)}{\pi_{t}(V)}K_t^{\rightarrow}(x,y)\,.
\end{aligned}
$$
\end{remark}
\newpage
\subsection{Poincar\'e case}  
Let $\pi \in \mathcal{M}_{<+\infty}(V)$ and $K$ a Markov transition operator on $V$. We assume that $\pi K = \pi$ and let $K^*$ be the dual of $K: \ell^2(\pi)\to\ell^2(\pi)$. We recall that the Dirichlet form subordinated to the measure $\pi$ and the operator $Q:=K^*K$ is defined by:
$$
\begin{aligned}
    \forall (f,g) \in \ell^2(\pi)\,,\quad \hspace{0.1cm}&
    \mathcal{E}_{Q,\pi_t}(f,g) =\left\langle (I-Q)f\mid g\right\rangle_{\pi} \\
    &= \frac{1}{2}\sum_{x \in V}\sum_{y \in V} (f(x)-f(y))(g(x)-g(y)) Q(x,y)\pi(x)\,.
\end{aligned}
$$
\begin{definition}
    In this context, we define $\lambda(Q,\pi)$ by:
\begin{equation}
	1-\lambda(Q,\pi) =\inf\left\{  \frac{\mathcal{E}_{Q,\pi}(f)}{\text{Var}_{\pi}(f)} \mid f \in \ell^2(\Tilde{\pi})\,,\,f \text{ non-constant} \right\}\,.
\end{equation}
We call $1-\lambda(Q,\pi)$ the spectral gap of $Q$ and if $1-\lambda(Q,\pi) >0$, we say that $Q$ satisfies a \textbf{Poincar\'e condition}.
\end{definition}
\begin{remark}
    First, note that \begin{equation}
	1-\lambda(Q,\pi) =1-\lambda(Q,\Tilde{\pi})=\inf\left\{  \frac{\mathcal{E}_{Q,\Tilde{\pi}}(f)}{\text{Var}_{\tilde{\pi}}(f)} \mid f \in \ell^2(\Tilde{\pi})\,,\,f \text{ non-constant} \right\} \,.
\end{equation} 
This quantity is independent of the normalization of $\pi$, so, we forget to specify on it. Furthermore, the constant $\lambda(Q,\pi)$ is the second highest eigenvalue of $Q$.\\
In Sections \ref{Motiv and Back}  and \ref{examples}, we use the notation $\gamma(Q)= 1-\lambda(Q,\pi)$. We have:
\begin{equation}
	\forall f \in \ell^2(\tilde{\pi})\,,\quad (1-\lambda(Q,\pi))\text{Var}_{\pi}(f) \le  \mathcal{E}_{Q,\pi}(f,f)\,.
\end{equation}
\end{remark}

Our main result is the following, it is the key to prove Theorem \ref{PoincaréthmMer} and the proof paves the way to the logarithmic Sobolev case:
\begin{theorem}\label{Poincaréthm}
Let $(K_t)_{t \ge 1}$ be a sequence of Markov transitions operators and $(\pi_t)_{t\ge 1}$ a sequence of $\mathcal{M}_{<+\infty}(V)$. Assume that the sequence $\{(K_t,\pi_t)\}_{t\ge 1}$ is a finite non-decreasing environment. Let $\lambda_t$ be the second highest eigenvalue of $Q_t$.\\
Then,
\begin{equation}\label{Poincaré ineq}
	\forall t \ge 0\,,\,\forall f \in \ell^2(\pi_t)\,,\quad\text{Var}_{\Tilde{\pi}_1}(K_{0,t}f)
	\le \frac{\pi_t(V)}{\pi_1(V)}\text{Var}_{\Tilde{\pi}_t}(f) \prod_{s=1}^t\lambda_s \,. 
\end{equation}
And,
\begin{equation}\label{controlNorm2}
\left\| K_{0,t} -\Tilde{\pi}_1(K_{0,t})\right\| _{\ell^2(\Tilde{\pi}_t)\rightarrow\ell^2(\Tilde{\pi}_1)}
	\le \sqrt{\frac{\pi_t(V)}{\pi_1(V)}\prod_{s=1}^t\lambda_s} \,.
\end{equation}
\end{theorem}

%%%%%%%%%%%%%%%%%%%%%%%%%%%%%%%%%%%POINCARE%%%%%%%%%%%%%%%%%%%%%%%%%%%%%%%%%%%%%%%%%%%%%%%%%%%%%%%%%%%%%%%%%%%%%%%%%%%%%%%%%%%%%%%%%%%%%%%%%%%%%%%%%%%%%%%%%%%%%%%%%%%%%%%%%%%%%%%%%%%%%%%%%%%%%%%%%%%%%%%%%%%%%%%%%%%%%%%%%%%%%%

\begin{remark}\label{formulevariationnelevariance}
The proof of Theorem \ref{Poincaréthm} is not difficult at all. It uses a backward induction and the following:
\begin{equation}\label{mini}
	\forall f \in \ell^2(\pi_t)\,,\quad \text{Var}_{\pi_t}(f)= \inf\left\{ \left\| f-c\right\| ^2_{\ell^2(\pi_t)} \mid c \in \mathbb{R} \right\} \,.
\end{equation}
\end{remark}
%%%%%%%%%%%%%%%%%%%% PREUVE POINCARE%%%%%%%%%%%%%%%%%%%%%%%%%%%%%%%%%
	
\begin{proof}[Proof of Theorem \ref{Poincaréthm}]\label{proof4}
We set $\pi_0$ equal to $\pi_1$. Remark \ref{Inclusion} implies that:
$$\forall t\ge 1\,,\, \forall s \in [[1,t]]\,,\quad \ell^2(\pi_t)\subset \ell^2(\pi_s)\,.$$
\vspace{0,125cm}
Fix $t \ge 1$ and $f\in\ell^2(\pi_t)$. We use the following notation:
$$\forall s \in [[0,t-1]]\,,\quad a_t(s)= \left\| K_{t-s,t}f -\Tilde{\pi}_{t-s}(K_{t-s,t}f)\right\| ^2_{\ell^2(\pi_{t-s})}\,. $$
Given $s\in [[0,t-1]]$, we have:
\begin{equation}\label{aitererr}
    a_t(s+1)\le \lambda_{t-s}a_t(s)\,.
\end{equation}
Indeed, 
\begin{equation*}
\begin{aligned}
	a_t(s+1)&=\left\| K_{t-s}K_{t-s,t}f -\Tilde{\pi}_{t-s-1}(K_{t-s}K_{t-s,t}f)\right\| ^2_{\ell^2(\pi_{t-s-1})}\quad\text{by variance's minimality,}\\
	&\le \left\| K_{t-s}K_{t-s,t}f -\Tilde{\pi}_{t-s}(K_{t-s}K_{t-s,t}f)\right\| ^2_{\ell^2(\pi_{t-s-1})}\\
	&\le\left\| K_{t-s}K_{t-s,t}f -\Tilde{\pi}_{t-s}(K_{t-s}K_{t-s,t}f)\right\| ^2_{\ell^2(\pi_{t-s})} \quad \text{using $\pi_{t-s}\geq\pi_{t-s-1}$,}\\
	&\le \lambda_{t-s}a_t(s) \quad\quad \text{by Poincar\'e Inequality at time }t-s\,.
\end{aligned}
\end{equation*}
By iterating Equation (\ref{aitererr}), a reverse induction argument yields:
\begin{equation}
	a_t(t)=\text{Var}_{\pi_0}(K_{0,t}f) \le  \text{Var}_{\pi_t}(f) \prod_{s=1}^{t}\lambda_{s} =a_t(0) \prod_{s=0}^{t-1}\lambda_{t-s}\,.
\end{equation}
It yields Inequality (\ref{Poincaré ineq}). In order to find Inequality (\ref{controlNorm2}), remark that:
$$\left\|  f -\Tilde{\pi}_t(f)\right\| _{\ell^2(\Tilde{\pi}_t)} \le \left\|  f \right\| _{\ell^2(\Tilde{\pi}_t)}\,.$$
\end{proof}

%%%%%%%%%%%%%%%%%%%%%%%%%%%%%%%%%%%%%%%%%%%%%%%%%%%%%%%%%%%%%%%%%%%%%%%%%%%%%%%%%%%%%%%%%%%%%%%%%%%%%%%%%%%%%%%%%%%%%%%%%%%%%%%%%%%%%%%%%%%%%%%%%%%%%%%%%%%%%%%%%%%%%%%%%%%%%%%%%%%%%%%%%%%%%%%%%%%%%%%%%%%%%%%%%%%%%%%%%%%%%%%
In the study of merging times, a family of operators naturally arises. We need this family to apply Theorem \ref{Poincaréthm} to the study of merging-times. The following result is a Definition-Proposition.
\begin{proposition}\label{Lien}
Let $(K_t)_{t \ge 1}$ be a sequence of irreducible aperiodic Markov transition operators and $(\pi_t)_{t\ge 1}$ a sequence of finite measures. Assume that for $t\geq 1$, $\pi_t$ is an invariant measure of $K_t$. For any $s\geq 0$, $t\geq s$, we define the operator $O_{s,t}$ by:
\begin{equation*}
    \forall f: V\mapsto \mathbb{R}\,,\quad O_{s,t}f:=K_{s,t}f - \Tilde{\pi}_s(K_{s,t}f)\,. 
\end{equation*}
It is an operator from $\ell^{2}(\tilde{\pi}_{t})$ to $\ell^{2}(\tilde{\pi}_{s})$,
i.e.\ $O_{s,t}:\ell^{2}(\tilde{\pi}_{t}) \to \ell^{2}(\tilde{\pi}_{s})$.
\\ 
Let $O_{s,t}^{\rightarrow}$ be the dual of the operator $O_{s,t}$ in the precise spaces: $\ell^2(\Tilde{\pi}_t)\rightarrow\ell^2(\Tilde{\pi}_s)$.\\
Let $\mathbf{1}$ be the function constant equal to 1. For any $z\in V$, for all $t\geq 1$, let $\mu_t^z$ be the law of the chain at time $t$ started at $z$ and driven by $(K_t)_{t\geq1}$. Let $h_t^z =\frac{\mu_t^z}{\Tilde{\pi}_t}$ and $\pi_0=\pi_1$.\\
Then, \begin{enumerate}
    \item For all $t \geq 1$, the family $(O_{s,t})_{s\leq t}$ forms a semigroup.
    \item For any $t \geq s \geq 0$, we have:  
    $$
    \forall f: V\mapsto \mathbb{R}\,,\quad 
    O_{s,t}^{\rightarrow}f = K_{s,t}^{\rightarrow}f - K_{s,t}^{\rightarrow} 1\Tilde{\pi}_s(f)\,.
    $$
    \item Moreover, for all $z \in V$,  
    $$
    O_{0,t}^{\rightarrow}(h_0^z) = h_t^z - K_{0,t}^{\rightarrow} \mathbf{1} \,.
    $$
\end{enumerate}
\end{proposition}

\begin{remark}
Note that for all $t\geq 1$, for all $z\in V$,
\begin{equation*}
h_t^z=K_{0,t}^{\rightarrow} h_0^z\quad\text{and} \quad K_{0,t}^{\rightarrow}\mathbf{1}\Tilde{\pi}_0(h_0^z)= K_{0,t}^{\rightarrow}\mathbf{1}\,.
\end{equation*}
Remark that the quantity $K_{0,t}^{\rightarrow}\mathbf{1}\Tilde{\pi}_0(h_0^z)$ does not depend of $z\in V$, (we recall that $\pi_0=\pi_1$).
\end{remark}
\begin{proof}
We prove \(1.\): the family  $(O_{s,t})_{s\le t}$ is a semi-group.\\
Note that for any $s\le r\le t$, for any $f: V\mapsto \mathbb{R}\,,$
$$ K_{s,r}K_{r,t}f=K_{s,t}f \quad \text{and}\quad \Tilde{\pi}_s(K_{s,r}f)K_{r,t}=\Tilde{\pi}_s(K_{s,t}f)\,, $$
we find:
$$-K_{s,r}\Tilde{\pi}_r(K_{r,t}f)+\Tilde{\pi}_s(K_{s,r}f)\Tilde{\pi}_r(K_{r,t}f)= -\Tilde{\pi}_r(K_{r,t}f)+\Tilde{\pi}_r(K_{r,t}f)=0\,.$$
Then, for any $s\le r\le t$,
$$\begin{aligned}
O_{s,r}O_{r,t}f&=K_{s,r}K_{r,t}f- \Tilde{\pi}_s(K_{s,r}K_{r,t}f) -K_{s,r}\Tilde{\pi}_r(K_{r,t}f)+\Tilde{\pi}_s(K_{s,r}\Tilde{\pi}_r(K_{r,t}f))\\
&= O_{s,t}f\,.
\end{aligned}
$$
We prove \(2.\), for any $t\geq s\geq 0$, we have:
$$O_{s,t}^{\rightarrow}f= K_{s,t}^{\rightarrow}.fK_{s,t}^{\rightarrow}1\Tilde{\pi}_s(f)\,. $$
It is straightforward that:
$$O_{s,t}^{\rightarrow}f=K_{s,t}^{\rightarrow}f - \left(\Tilde{\pi}_s(K_{s,t}.)\right)^{\rightarrow}f\,.$$
We recall that the operators $K_{s,t}^{\rightarrow}$ and $\left(\Tilde{\pi}_s(K_{s,t}.)\right)^{\rightarrow}$ are the dual of the operators $K_{s,t}$ and respectively $\Tilde{\pi}_s(K_{s,t}.)$ from $\ell^2(\Tilde{\pi}_{t})$ to $\ell^2(\Tilde{\pi}_s)$. Note that:
\begin{equation*}
\forall f\in\ell^2(\Tilde{\pi}_t)\,,\,\forall g\in\ell^2(\Tilde{\pi}_s)\,,\quad \left\langle \Tilde{\pi}_s(K_{s,t}f) \mid g \right\rangle_{\Tilde{\pi}_s}= \left\langle f \mid \left(\Tilde{\pi}_s(K_{s,t}.)\right)^{\rightarrow}g  \right\rangle_{\Tilde{\pi}_t} =\Tilde{\pi}_s(K_{s,t}f)\Tilde{\pi}_s(g)\,.
\end{equation*}
And, we get:
\begin{equation*}
\Tilde{\pi}_s(K_{s,t}f)\Tilde{\pi}_s(g)=\left\langle K_{s,t}f \mid \mathbf{1} \right\rangle_{\Tilde{\pi}_s}\Tilde{\pi}_s(g)
=\left\langle f \mid \left(\Tilde{\pi}_s(g)K_{s,t}\right)^{\rightarrow}\mathbf{1} \right\rangle_{\Tilde{\pi}_t}\,.
\end{equation*}
We find that the operator $\left(\Tilde{\pi}_s(K_{s,t}.)\right)^{\rightarrow}:\ell^2(\Tilde{\pi}_s)\rightarrow\ell^2(\Tilde{\pi}_t)$ is equal to the operator $K_{s,t}^{\rightarrow}1\Tilde{\pi}_s(.)$.\\~\\
Finally, we prove \(3.\), for all $z\in V$, for any $t\geq s\geq 0$,
$$O_{0,t}^{\rightarrow}(h_0^z)=h_t^z- K_{0,t}^{\rightarrow}\mathbf{1} \,.$$
We recall that $\pi_0=\pi_1$. We have for $z\in V$, for all $t\geq 1$,
$$\forall y \in V\,,\quad h_t^z(y) = \frac{K_{0,t}(z,y)}{\Tilde{\pi}_t(y)}=   \frac{K_{0,t}^{\rightarrow}(y,z) }{\Tilde{\pi}_0(z)} = K_{0,t}^{\rightarrow} h_0^z(y)\,.$$
Note that:
$$\forall z \in V\,,\quad K_{0,t}^{\rightarrow}\mathbf{1}\Tilde{\pi}_0(h_0^z) = K_{0,t}^{\rightarrow}\mathbf{1}\,.$$ 
Finally, we get:
$$O_{0,t}^{\rightarrow}(h_0^z)= K_{0,t}^{\rightarrow} h_0^z- K_{0,t}^{\rightarrow}\mathbf{1}\,.  $$
\end{proof}

%%%%%%%%%%%%%%%%%%%%%%%%%%%%%%%%%%%%%%%%%%%%%%%%%%%%%%%%%%%%%%%%%%%%%%%%%%%%%%%%%%%%%%%%%%%%%%%%%%%%%%%%%%%%%%%%%%%%%%%%%%%%%%%%%%%%%%BEGIN PREUVE%%%%%%%%%%%%%%%%%%%%%%%%%%%%%%%%%%%%%%%%%%%%%%%%%%%%%%%%%%%%%%%%%%%%%%%%%%%%%%%%%%%%%%%%%%%%%%%%%%%%%%%%%%%%%%%%%%%%%%

In the following, we provide a proof of Theorem \ref{PoincaréthmMer}:
\begin{proof}[Proof of Theorem \ref{PoincaréthmMer}]\label{preuveMergP}
For $z\in V$ and $t\geq 1$, let $\mu_t^z$ the law of the chain started at $z$ and at time $t$ and driven by $(K_{t})_{t\geq1}$. Let $m_t$ be $K_{0,t}^{\rightarrow}\mathbf{1}$ and $\frac{\mu_t^z}{\Tilde{\pi}_t}$ be $h_t^z$.\\
The triangle inequality gives for all $(x,y)\in V$:
$$
d_{TV}(\mu_t^x , \mu_t^y ) \le\frac{1}{2}\left\|  h_t^x -h_t^y\right\| _{\ell^2(\Tilde{\pi}_t)} \le\frac{1}{2}\left\|  h_t^x -m_t\right\| _{\ell^2(\Tilde{\pi}_t)} +\frac{1}{2}\left\|  h_t^y -m_t\right\| _{\ell^2(\Tilde{\pi}_t)} \,.
$$
Then, Proposition \ref{Lien} gives:
$$\forall z \in V\,,\forall t\geq1\,,\quad\left\|  h_t^z-m_t\right\| _{\ell^2(\Tilde{\pi}_t)}^2 \le  \left\| O_{0,t}^{\rightarrow}  \right\| _{\ell^2(\Tilde{\pi}_1)\rightarrow\ell^2(\Tilde{\pi}_t)}^2\left\| h_0^z \right\| _{\ell^2(\Tilde{\pi}_1)}^2 \,.$$
Then, using duality  and Theorem \ref{Poincaréthm}, one finds for all $t\geq 1$,
$$
\begin{aligned}
	\left\| O_{0,t}^{\rightarrow}  \right\| _{\ell^2(\Tilde{\pi}_1)\rightarrow\ell^2(\Tilde{\pi}_t)}^2 &= \left\| K_{0,t}-\Tilde{\pi}_1(K_{0,t})\right\| _{\ell^2(\Tilde{\pi}_t)\rightarrow\ell^2(\Tilde{\pi}_1)}^2 \\ 
	&\le  \frac{\pi_t(V)}{\pi_1(V)}\prod_{s=1}^t \lambda_s\,.
\end{aligned}
$$
Finally, remark that $\left\|  h_0^z\right\| _{\ell^2(\Tilde{\pi}_1)} = \frac{1}{\sqrt{\Tilde{\pi}_1(z)}}$ to conclude that:
\begin{equation*}
	\forall t\geq 1\,,\quad d_{TV}( \mu_t^x ,\mu_t^y ) \le \frac{1}{2}\left(\frac{1}{\sqrt{\Tilde{\pi}_1(x)}}+\frac{1}{\sqrt{\Tilde{\pi}_1(y)}}\right)
	\sqrt{\frac{\pi_t(V)}{\pi_1(V)}\prod_{s=1}^t \lambda_s}\,.
\end{equation*}
\end{proof}

\newpage
\subsection{Nash Inequalities and Poincar\'e} 

Some of the best bounds on $L^2$ mixing times were shown by the use of Nash inequalities. Diaconis and Saloff-Coste introduced them in \cite{Dia} to study mixing. In the time-homogeneous case, a Nash inequality is a tool used to show that when the variance of the density is extremely high, then the walk converges even faster than predicted by the Poincar'e constant. In this part, we show how to use this tool in the time-inhomogeneous case in the non-decreasing environment case.

We aim to prove Theorem \ref{MergingNash}. We first prove Lemma \ref{lemmetempsmelange} and then give a key proposition:\begin{proof}[Proof of Lemma \ref{lemmetempsmelange}]
For $x,y$ in $V$, for $t\geq 1$, we have for any $z\in V$:
\begin{equation*}
\begin{aligned}
      \frac{1}{\pi_{t+1}(V)}\left| \frac{\mu_{t+1}^x(z)-\mu_{t+1}^y(z)}{\Tilde{\pi}_{t+1}(z)}\right|&= \left| \frac{\sum_{u \in V}\left(\mu_{t}^x(u)-\mu_{t}^y(u)\right)K_{t+1}(u,z)}{\pi_{t+1}(z)}\right|\\
      & \leq \max\left\{ \left| \frac{\mu_t^x(v)-\mu_t^y(v)}{\pi_t(v)}\right| \mid v\in V \right\} \sum_{u \in V}\frac{\pi_t(u)K_{t+1}(u,z)}{\pi_{t+1}(z)}\\
      & \leq \max\left\{ \left| \frac{\mu_t^x(v)-\mu_t^y(v)}{\pi_t(v)}\right| \mid v\in V \right\} \sum_{u \in V}\frac{\pi_{t+1}(u)K_{t+1}(u,z)}{\pi_{t+1}(z)} \quad \text{using} \, \pi_t \leq \pi_{t+1}\,,\\
      &= \max\left\{ \left| \frac{\mu_t^x(v)-\mu_t^y(v)}{\pi_t(v)}\right| \mid v\in V \right\} \quad \text{using } \pi_{t+1}K_{t+1} = \pi_{t+1}\,,\\
      &=\frac{1}{\pi_t(V)} s_{\infty}( \mu_t^x \,,\,  \mu_t^y\mid \tilde{\pi}_t)\,.
\end{aligned}
\end{equation*}
By taking the supremum over $z\in V$, we obtain:
$$\forall t\geq 1\,,\quad \frac{1}{\pi_t(V)} s_{\infty}( \mu_t^x \,,\,  \mu_t^y\mid \tilde{\pi}_t) \geq \frac{1}{\pi_{t+1}(V)} s_{\infty}( \mu_{t+1}^x \,,\,  \mu_{t+1}^y\mid \tilde{\pi}_{t+1})\,.$$
\end{proof}
\begin{proposition}\label{controleformdir}
Let $r\geq 1$ and $K_r$ be a Markov transition operator. Let $\pi_{r}$ and $\pi_{r-1}$ two positive finite measures. Assume that $\pi_r$ is an invariant measure for $K_r$. Assume that for all $x$, $\pi_r(x) \geq \pi_{r-1}(x)$. Define $Q_r$ by $K_r^*K_r$.  Then,
$$
\forall g \in \ell^2(\pi_r)\,,\quad \mathcal{E}_{Q_{r},\pi_{r}}(g,g) \le \left\|g - \Tilde{\pi}_{r}(g)\right\|_{\ell^{2}\left(\pi_{r}\right)}^{2} - \left\|K_rg - \Tilde{\pi}_{r-1}(K_rg)\right\|_{\ell^{2}\left(\pi_{r-1}\right)}^{2}\,.
 $$
\end{proposition}
\begin{proof}
Write:
$$ 
\begin{aligned}
\left\|K_rg - \Tilde{\pi}_{r}(K_rg)\right\|_{\ell^{2}\left(\pi_{r}\right)}^{2} &\ge \left\|K_rg - \Tilde{\pi}_{r}(K_rg)\right\|_{\ell^{2}\left(\pi_{r-1}\right)}^{2}\\
&\ge  \left\|K_rg - \Tilde{\pi}_{r-1}(K_rg)\right\|_{\ell^{2}\left(\pi_{r-1}\right)}^{2} \,. 
\end{aligned}
$$
And, 
$$
\begin{aligned}\mathcal{E}_{Q_{r},\pi_{r}}(g,g) &= \left\|g - \Tilde{\pi}_{r}(g)\right\|_{\ell^{2}\left(\pi_{r}\right)}^{2} - \left\|K_rg - \Tilde{\pi}_{r}(K_rg)\right\|_{\ell^{2}\left(\pi_{r}\right)}^{2}\\
&\le \left\|g - \Tilde{\pi}_{r}(g)\right\|_{\ell^{2}\left(\pi_{r}\right)}^{2} - \left\|K_rg - \Tilde{\pi}_{r-1}(K_rg)\right\|_{\ell^{2}\left(\pi_{r-1}\right)}^{2}\,
\end{aligned}
\,. $$
\end{proof}
Nash inequalities allow control of the norms of the operators $(O_{r,t})_{r\leq t}=\left(K_{r,t}-\Tilde{\pi}_r(K_r,t)\right)_{r\leq t}$.
\begin{theorem}\label{ThnNashBorn}
Let $(K_t)_{t \ge 1}$ be a sequence of irreducible aperiodic Markov transition operators on $V$ finite and $(\pi_t)_{t\ge 1}$ a sequence of positive measures. Assume that the sequence $\{(K_t,\pi_t)\}_{t\ge 1}$ is a finite non-decreasing environment. Let $Q_t$ be $K_t^*K_t$. Let $T \geq 1$ and $C, D>0$. Assume that for all $t\geq 1$, the pair $(Q_t,\Tilde{\pi}_t)$  satisfies the $\mathcal{N}(C,D,T)$ Nash inequality. We also assume that $\pi_1(V)\ge 1$. Then, for $0 \leq r \leq t \leq T$,
$$
\left\|K_{r, t}-\Tilde{\pi}_r(K_{r,t}.)\right\|_{\ell^{1}\left(\Tilde{\pi}_{t}\right) \rightarrow \ell^{2}\left(\Tilde{\pi}_{r}\right)}\leq\left(\frac{ C2^{1/D} (1+1 / T)(1+\lceil 4 D\rceil)}{t-r+1}\right)^{D}\frac{\pi_t(V)}{\sqrt{\pi_r(V)}}\,,
$$
and,
$$
\left\|K_{r, t}-\Tilde{\pi}_r(K_{r,t}.)\right\|_{\ell^{2}\left(\Tilde{\pi}_{t}\right) \rightarrow \ell^{\infty}\left(\Tilde{\pi}_{r}\right)}\leq 2\left(\frac{4 C2^{1/D} (1+1 / T)(1+\lceil 4 D\rceil)}{t-r+1}\right)^{D}\sqrt{\pi_t(V)}\,.
$$
\end{theorem}
We state here the Riesz--Thorin interpolation theorem:\begin{theorem}[Riez-Thorin interpolation Theorem]
     Let $1 \leq p_1, q_1 \leq \infty$  and\\  $1 \leq p_1, q_1 \leq \infty$. Assume that $p_1 \leq p_2$ and $ q_1 \leq q_2$.  For any $p \in [p_1 , p_2]$, let $\theta$ and $q \in\left[q_1, q_2\right]$ be:
     $$ 1 / p=\theta / p_1+(1-\theta) / p_2 \quad \text{and}\quad1 / q=\theta / q_1+(1-\theta) / q_2\,. $$ 
    Then, for any operator $K$, we have:
$$
\|K\|_{\ell^p \rightarrow \ell^q} \leq\|K\|_{\ell^{p_1} \rightarrow \ell^{q_1}}^\theta\|K\|_{\ell^{p_2} \rightarrow \ell^{q_2}}^{1-\theta} .
$$
\end{theorem}
\begin{proof}[Proof of Theorem \ref{ThnNashBorn}]
First, note  that for $ s \geq   1$, if the pair $(Q_s,\Tilde{\pi}_s)$  satisfies the $\mathcal{N}(C,D,T)$ Nash inequality and $\pi_s(V) \ge 1$, then, the pair $(Q_s,\pi_s)$  satisfies also  the $\mathcal{N}(C,D,T)$ Nash inequality. Indeed, we have:
$$\forall s \geq1\,,\quad\pi_s(V) \ge 1 \implies\frac{\pi_s(V)^{1+1/2D} }{\pi_s(V)^{1+1/D}} \le 1\,.$$
And, we get:
$$
\forall f: V \rightarrow \mathbb{R}\,, \quad\|f\|_{\ell^2(\pi_s)}^{2+1 / D} \leq C\left(  \mathcal{E}_{Q_s,\pi_s}(f,f) +\frac{1}{T}\|f\|_{\ell^2\left(\pi_s\right)}^2\right)\|f\|_{\ell^1(\pi_s)}^{1 / D} \,. $$
We impose $\pi_0=\pi_1$. Fix $t \in[[1,T]]$ and for $s\in [[0,t]]$, we define:
$$
a_{t}(s)=\left\|K_{t-s, t}f - \Tilde{\pi}_{t-s}(K_{t-s,t} f)\right\|_{\ell^{2}\left(\pi_{t-s}\right)}^{2} .
$$
In the proof of Theorem \ref{Poincaréthm}, it is proven that the sequence $\left(a_{t}(s)\right)_{s\in [[0,t]]}$ is non-increasing.\\~\\
Now, let $f \in \ell^1(\pi_T)$, we apply Nash inequality to the function $K_{t-s,t}f-\Tilde{\pi}_{t-s}(K_{t-s,t}f)$.\\
We get for all $s\in[[0,t-1]] $, 
\begin{equation*}
\begin{aligned}
    &a_{t}(s)^{1+1/(2D)}=\left\|K_{t-s, t}f - \Tilde{\pi}_{t-s}(K_{t-s,t} f)\right\|_{\ell^{2}\left(\pi_{t-s}\right)}^{2(1+1 /(2 D))}\\
	&\le C\left(\mathcal{E}_{Q_{t-s},\pi_{t-s}}(K_{t-s,t}f,K_{t-s,t}f)+a_{t}(s) / T\right)\left\|K_{t-s,t}f-\Tilde{\pi}_{t-s}(K_{t-s,t}f)\right\|_{\ell^1(\pi_{t-s})}^{1/D}\\
	&\le C2^{1/D}\left(\mathcal{E}_{Q_{t-s},\pi_{t-s}}(K_{t-s,t}f,K_{t-s,t}f)+a_{t}(s) / T\right)\left\|K_{t-s,t}f\right\|_{\ell^1(\pi_{t-s})}^{1/D}\\
    &\le C2^{1/D}\left( a_t(s)-a_t(s+1)+a_{t}(s) / T\right)\left\|K_{t-s,t}f\right\|_{\ell^1(\pi_{t-s})}^{1/D}\,.
\end{aligned}
\end{equation*}
To get the last line, we apply Proposition \ref{controleformdir} with $g= K_{t-s,t}f$ and $r=t-s$.\\~\\
So far, we have for all $s\in[[0,t]]$,
\begin{equation}\label{equaNash}
    a_{t}(s)^{1+1/(2D)}
    \le C2^{1/D}\left( a_t(s)-a_t(s+1)+a_{t}(s) / T\right)\left\|K_{t-s,t}f\right\|_{\ell^1(\pi_{t-s})}^{1/D}\,.
\end{equation}
For $s\in[[0,t]]$, let  $n_t(s)=\|K_{t-s,t}f\|_{\ell^1(\pi_{t-s})}$. We prove by induction that this quantities are non-increasing:
\begin{equation*}
\begin{aligned}
\forall s\in [[1,t]]\,,\quad n_t(s)&=\left\|K_{t-s+1}K_{t-s+1,t}f\right\|_{\ell^1(\pi_{t-s})}\\
&\leq \left\|K_{t-s+1}K_{t-s+1,t}f\right\|_{\ell^1(\pi_{t-s+1})} \quad \text{using that} \quad \pi_{t-s}\leq \pi_{t-s+1}\,,\\
&\leq  \left\|K_{t-s+1,t}f\right\|_{\ell^1(\pi_{t-s+1})} \text{using that  } K_{t-s+1}:\ell^1(\pi_{t-s+1}) \rightarrow \ell^1(\pi_{t-s+1}) \text{ is a contraction}\,, \\ 
&=   n_t(s-1)  \,.
\end{aligned}
\end{equation*}
And, we find:
\begin{equation}\label{equa2}
\forall s \in [[0,t]]\,,\quad 	n_t(s) \le n_t(0) = \left\|f\right\|_{{\ell^1(\pi_{t}})}\,.
\end{equation}
 Merge Equations (\ref{equa2}) and (\ref{equaNash}) to finally find:
$$
\forall s \in [[0,t-1]]\,,\quad a_{t}(s)^{1+1 /(2 D)} \leq C 2^{1/D} \left\|f\right\|_{{\ell^1(\pi_{t}})} ^{1/D}\left(a_{t}(s)-a_{t}(s+1)+a_{t}(s) / T\right)\,.
$$
Let $B=B(D, T)= 2^{1/D}(1+1 / T)(1+\lceil 4 D\rceil)$, Lemma 3.1 of \cite{Dia} yields that:
$$
\forall\, 0 \leq r \leq t \leq T\,,\quad a_{t}(t-r) \leq\left(\frac{C   B}{t-r+1}\right)^{2 D}\left\|f\right\|_{{\ell^1(\pi_{t}})} ^{2}\,. 
$$
In particular, we get if $0 \leq r \leq t \leq T$:
$$
\left\|K_{r, t}f-\Tilde{\pi}_r(K_{r,t}f)\right\|_{\ell^{2}\left(\pi_{r}\right)} \leq((C B) /(t-r+1))^{D} \left\|f\right\|_{\ell^{1}\left(\pi_{t}\right)}\,,
$$
and, we find:
$$
\left\|K_{r, t}-\Tilde{\pi}_r(K_{r,t}.)\right\|_{\ell^{1}\left(\Tilde{\pi}_{t}\right)\rightarrow\ell^{2}\left(\Tilde{\pi}_{r}\right)} \leq((C B) /(t-r+1))^{D} \frac{\pi_t(V)}{\sqrt{\pi_r(V)}} .
$$
Let $0 \leq s \leq t \leq T$, we recall that $O_{s,t}=K_{s, t}-\Tilde{\pi}_s(K_{s,t}.)$ and we want to bound the quantity $\left\|O_{s,t}^{\Rightarrow}\right\|_{\ell^{1}\left(\pi_{s}\right) \rightarrow \ell^{\infty}\left(\pi_{t}\right)}$. First, note that duality gives:
$$
\forall\, 0 \leq s \leq t \leq T\,,\quad \left\|O_{s,t}^{\rightarrow}\right\|_{\ell^{2}\left(\Tilde{\pi}_{s}\right) \rightarrow \ell^{\infty}\left(\Tilde{\pi}_{t}\right)} \leq((C B) /(t-s+1))^{D} \frac{\pi_t(V)}{\sqrt{\pi_s(V)}} .
$$
 We define the quantity $M(T)$ by:
$$
M(T)=\max\left\{(t-s+1)^{2 D}\left\|O_{s,t}^{\Rightarrow}\right\|_{\ell^{1}\left(\pi_{s}\right) \rightarrow \ell^{\infty}\left(\pi_{t}\right)}\mid  0 \leq s \leq t \leq T\right\} .
$$
Let $l=\left\lfloor\frac{t-s}{2}\right\rfloor+s$, so that $0 \leq s \leq l \leq t \leq T$. We find:
$$
\begin{aligned}
	\left\|O_{s,t}^{\Rightarrow}\right\|_{\ell^{1}\left(\pi_{s}\right) \rightarrow \ell^{\infty}\left(\pi_t\right)} & \leq\left\|O_{s,l}^{\Rightarrow}\right\|_{\ell^{1}\left(\pi_{s}\right) \rightarrow \ell^{2}\left(\pi_{l}\right)}\left\|O_{l,t}^{\Rightarrow}\right\|_{\ell^{2}\left(\pi_{l}\right) \rightarrow \ell^{\infty}\left(\pi_{t}\right)} \\
	& \leq \left\|O_{s,l}^{\Rightarrow}\right\|_{\ell^{1}\left(\pi_{s}\right) \rightarrow \ell^{2}\left(\pi_{l}\right)}((C B) /(t-l+1))^{D}.
\end{aligned}
$$
Note that for all $0 \leq s \leq l \leq T$:
$$
	\left\|O_{s,l}^{\Rightarrow}\right\|_{\ell^{1}\left(\pi_{s}\right) \rightarrow \ell^{2}\left(\pi_{l}\right)}\leq\left\|O_{s,l}^{\Rightarrow}\right\|_{\ell^{1}\left(\pi_{s}\right) \rightarrow \ell^{\infty}\left(\pi_{l}\right)}^{1 / 2}\left\|O_{s,l}^{\Rightarrow}\right\|_{\ell^{1}\left(\pi_{s}\right) \rightarrow \ell^{1}\left(\pi_{l}\right)}^{1 / 2} .
$$
This follows from the fact that for any function $g$:
$$
	\left\|g\right\|_{\ell^{2}\left(\pi_l\right)} \leq\left\|g\right\|_{\ell^{\infty}\left(\pi_l\right)}^{1 / 2}\left\|g\right\|_{\ell^{1}\left(\pi_l\right)}^{1 / 2} \text {. }
$$
Moreover, note that:
\begin{equation}
    \left\|O_{s,l}^{\Rightarrow}\right\|_{\ell^{1}\left(\pi_{s}\right) \rightarrow \ell^{1}\left(\pi_{l}\right)}= \left\|O_{s,l}\right\|_{\ell^{\infty}\left(\pi_{l}\right) \rightarrow \ell^{\infty}\left(\pi_{s}\right)} \le 2
\end{equation}
So, we get:
$$
\begin{aligned}
	\left\|O_{s,t}^{\Rightarrow}\right\|_{\ell^{1}\left(\pi_{s}\right) \rightarrow \ell^{\infty}\left(\pi_t\right)} & \leq\sqrt{2}\left(\frac{C B}{t-l+1}\right)^{D}\left\|O_{s,l}^{\Rightarrow}\right\|_{\ell^{1}\left(\pi_{s}\right) \rightarrow \ell^{\infty}\left(\pi_l\right)}^{1 / 2} \\
	& \leq\sqrt{2}\left(\frac{C B}{(t-l+1)(l-s+1)}\right)^{D}  M(T)^{1 / 2} \\
	& \leq\sqrt{2}\left(\frac{4 C B}{(t-s+1)^{2}}\right)^{D} M(T)^{1 / 2} .
\end{aligned}
$$
The last inequality follows from the following fact:
$$
t-l+1 \geq \frac{t-s+1}{2} \text { and } l-s+1 \geq \frac{t-s+1}{2} .
$$
Then, we get:
$$M(T) = +\infty\quad \text{or}\quad M(T) \leq 2 (4 C B)^{2 D} \,.$$
However, we assume that $V$ is finite, therefore, $M(T) \leq 2 (4 C B)^{2 D} $ and it follows:
$$
\forall\,0 \leq s \leq t \leq T\,,\quad \left\|O_{s,t}^{\Rightarrow}\right\|_{\ell^{1}\left(\pi_{s}\right) \rightarrow \ell^{\infty}\left(\pi_{t}\right)}\leq 2\left(\frac{4 C B}{t-s+1}\right)^{2 D} .
$$
By duality, we find:
$$
\left\|O_{s,t}\right\|_{\ell^{1}\left(\pi_{t}\right) \rightarrow \ell^{\infty}\left(\pi_{s}\right)}\leq2\left(\frac{4 C B}{t-s+1}\right)^{2 D} .
$$
Let $\theta = 1/2$, $p_1 = 1$, $q_1=q_2=p_2 = \infty$ so that: $$\frac{1}{p_{\theta}}=\frac{\theta}{p_1}+\frac{1-\theta}{p_2}\quad \text{and}\quad\frac{1}{q_{\theta}}=\frac{\theta}{q_1}+\frac{1-\theta}{q_2}\,.$$
Apply the Riesz-Thorin interpolation Theorem to find:
$$
\left\|O_{s,t}\right\|_{\ell^{2}\left(\pi_{t}\right) \rightarrow \ell^{\infty}\left(\pi_{s}\right)}\leq 2\left(\frac{4 C B}{t-s+1}\right)^{ D} .
$$
Finally, we get:
$$
\forall\,0 \leq s \leq t \leq T\,,\quad\left\|O_{s,t}\right\|_{\ell^{2}\left(\Tilde{\pi}_{t}\right) \rightarrow \ell^{\infty}\left(\Tilde{\pi}_{s}\right)}\leq 2\left(\frac{4 C B}{t-s+1}\right)^{ D} \sqrt{\pi_t(V)}.
$$
\end{proof}
We finally provide a proof of Theorem \ref{MergingNash}. This proof follows the same outline as the proof of Theorem \ref{PoincaréthmMer}. We reuse some of its arguments. 
\begin{proof}[Proof of Theorem \ref{MergingNash}]
Let $\pi_0:=\pi_1$ and for all $t\geq 0$, for any $z\in V$, let $\mu_t^z$ be the law at time $t$ of the chain started at $z$ and driven by $(K_{t})_{t\geq1}$.\\
For $t\geq 1$, let $m_t$ be $K_{0,t}^{\rightarrow}1$ and for $z\in V$, and $h_t^z$ be $\frac{\mu_t^z}{\Tilde{\pi}_t}$. The triangular inequality gives:
\begin{equation}
	\forall t\geq 0\,,\,\forall (x,y)\in V\,,\quad d_{TV}(\mu_t^x,\mu_t^y ) \le  \frac{1}{2}\left\| h_t^x -m_t\right\|_{\ell^2(\Tilde{\pi}_t)} +\frac{1}{2}\left\| h_t^y -m_t\right\|_{\ell^2(\Tilde{\pi}_t)}\,. 
\end{equation}
Let $B=B(D, T)=(1+1 / T)(1+\lceil 4 D\rceil)$. By applying Theorem~\ref{ThnNashBorn} to control the $\ell^1 \to \ell^2$ growth and Theorem \ref{Poincaréthm} to bound the spectral contraction, we obtain for all $z \in V$,
$$
\begin{aligned}
\forall s \in [[0,T]]\,,\,\forall t\geq s\,,\quad  
\left\| h_t^z-m_t \right\|_{\ell^2(\Tilde{\pi}_t)} &\leq  \left\|O_{s,t}^{\rightarrow}  \right\|_{\ell^2(\Tilde{\pi}_s)\rightarrow\ell^2(\Tilde{\pi}_t)}\left\|O_{0,s}^{\rightarrow}  \right\|_{\ell^1(\Tilde{\pi}_0)\rightarrow\ell^{2}(\Tilde{\pi}_s)} \left\|h_0^z \right\|_{\ell^{1}(\Tilde{\pi}_0)} \\
&\le 2\sqrt{\pi_t(V)}\left(\frac{4CB}{s+1}\right)^D\prod_{u=s+1}^t \sqrt{\lambda_u} 
\end{aligned} 
$$
Finally, we get:
\begin{equation*}
	\forall s \in [[0,T]]\,,\,\forall t\geq s\,,\quad d_{TV}(\mu_t^x, \mu_t^y) \le2\sqrt{\pi_t(V)}\left(\frac{4CB}{s+1}\right)^D\prod_{u=s+1}^t \sqrt{\lambda_u}\,.
\end{equation*}
In a second step, note that:
$$\forall t\geq 0\,,\quad \max\left\{\left|\frac{\mu_t^x (y) -\tilde{\pi}_1K_{0,t}(y)}{\Tilde{\pi}_t(y)}\right| \mid (x, y) \in V\right\}=\left\|O_{0,t}\right\|_{\ell^1\left(\Tilde{\pi}_{t}\right) \rightarrow \ell^{\infty}\left(\Tilde{\pi}_{0}\right)}\,.
$$
Let $r\leq T$, $u\geq 0$ and $t= 2r+u$, Theorem \ref{Poincaréthm} and Theorem \ref{ThnNashBorn} give:
$$
\begin{aligned}
	\left\|O_{0,t}\right\|_{\ell^1\left(\Tilde{\pi}_{t}\right) \rightarrow \ell^{\infty}\left(\Tilde{\pi}_{0}\right)} 
	\leq &\left\|O_{0,r}^{\rightarrow}\right\|_{\ell^{1}\left(\Tilde{\pi}_{0}\right) \rightarrow \ell^{2}\left(\Tilde{\pi}_{r}\right)} \times\left\|O_{r,r+u}^{\rightarrow}\right\|_{\ell^{2}\left(\Tilde{\pi}_{r}\right) \rightarrow \ell^{2}\left(\Tilde{\pi}_{r+u}\right)} \\
	&\times\left\|O_{r+u,2r+u}^{\rightarrow}\right\|_{\ell^{2}\left(\Tilde{\pi}_{r+u}\right) \rightarrow \ell^{\infty}\left(\Tilde{\pi}_{2r+u}\right)} \\
	&\le 4\pi_t(V)\left(\frac{4 C B}{r+1}\right)^{2D}  \prod_{s=r+1}^{r+u} \sqrt{\lambda_s}\,.
\end{aligned}
$$
Merge the inequalities above to conclude for $r\leq T$, $u\geq 0$ and $t= 2r+u$:
$$
 \max\left\{\left|\frac{\mu_t^x (y) -\tilde{\pi}_1K_{0,t}(y)}{\Tilde{\pi}_t(y)}\right| \mid (x, y) \in V\right\}\le 4\pi_t(V)\left(\frac{4 C B}{r+1}\right)^{2D}  \prod_{s=r+1}^{r+u} \sqrt{\lambda_s}\,.$$
Finally, the triangular inequality gives:
$$ 	\max \left\{ s_{\infty}(\mu_t^x,\mu_t^y \mid \Tilde{\pi}_t) \mid (x,y)\in V\right\} \leq  2 \max\left\{\left|\frac{\mu_t^x (y) -\tilde{\pi}_1K_{0,t}(y)}{\Tilde{\pi}_t(y)}\right| \mid (x, y) \in V\right\} \,.$$
And, for all $\eta \in (0,1)$, we get a bound on $ T_{\text{mer}}^{\infty}( \eta \,, (\pi_t)_{t\geq 1} )$.
\end{proof}
\newpage
\subsection{Logarithmic Sobolev Case}    
In line with the results obtained with functional inequalities for mixing estimates, hypercontractivity is one of the most powerful. These results adapt in a non-decreasing environment to estimate merging for time-inhomogeneous Markov chains.

First, we recall Definitions \ref{definitionentropy} and \ref{definitionconstantelogso}.
For  $\Tilde{\pi}$ be a probability measure on $V$ and  $f$ a function in $\ell^1(\Tilde{\pi})$, we define the entropy of $f$ with respect to $\Tilde{\pi}$ by:
\begin{equation}\label{formuleentropie}
    \mathcal{L}(f\mid \Tilde{\pi}):= \sum_{x\in V} f(x)\log\left(\frac{f(x)}{\Tilde{\pi}(f)}\right)\Tilde{\pi}(x)\,,
\end{equation}
and the standard logarithmic Sobolev constant of $Q$ , $\alpha(Q)$, as the best constant in the inequality: $$\forall f: V \rightarrow \mathbb{R} \,,\quad f- \text{non constant}\,,\quad\frac{\mathcal{E}_{Q,\Tilde{\pi}}(f,f)}{\mathcal{L}(f^2
\mid \Tilde{\pi})} \ge \alpha\, . $$
Note that we have an analogue of Remark \ref{formulevariationnelevariance} and Equation \ref{mini} for the entropy.
\begin{remark}
Fix $\Tilde{\pi}\in \mathcal{M}_{1}(V)$. For any $f$ non-negative, we have:
\begin{equation}\label{mini2}
	  \mathcal{L}(f\mid \Tilde{\pi})= \inf \left\{ \sum_{x\in V} \tilde{\pi}(x) \left( f(x) \log\left(\frac{f(x)}{c}\right)-f(x)+c \right)   \mid c >0 \right\} \,.
\end{equation}
Note that for a non-negative function $f$ and $c> 0$, we have:
$$f(x)\log\left(\frac{f(x)}{c}\right)-f(x)+c \geq  0\,.$$
From (\ref{formuleentropie}), we already know that the infinimum in Equality (\ref{mini2}) is achieved by $c=\Tilde{\pi}(f)$.
\end{remark}
\begin{proposition}\label{propoUtilSobolev}
Let $K$ be an irreducible aperiodic Markov transition operator on $V$ and $\Tilde{\pi}$ its invariant probability. Let $K^*$ be the dual of $K$ in $\ell^2(\Tilde{\pi})$. Set $\alpha:=\alpha(K^*K)$ the logarithmic Sobolev constant of $K^*K$. For all $q\geq 2$, let $q^*= (1+\alpha)q$, then,
\begin{equation}\label{inegalitéhyper}
\forall p\in [1, q^*]\,,\,\forall f \in \ell^q(\Tilde{\pi})\,,\quad\left\|Kf\right\|_{\ell^{p}(\Tilde{\pi})}\le \left\|f\right\|_{\ell^q(\Tilde{\pi})}\,.
\end{equation}
\end{proposition}
Inequality (\ref{inegalitéhyper}) is called a hypercontractivity inequality. For more details, consult \cite{Mi}. Now, we give an analogue of Remark \ref{Inclusion}:
\begin{remark}\label{inclusionentropie}
Let $t\geq 2$, $\pi_{t}$ and $\pi_{t-1}$ two positive finite measures.  Assume that for all $x\in V$, $\pi_t(x) \geq \pi_{t-1}(x)$. Let $\mathcal{S}_t$ be the set of functions such that $\mathcal{L}(f|\Tilde{\pi}_t)$ is well-defined. Then,
\begin{equation}
	\forall t\ge 2\,,\quad \mathcal{S}_{t} \subset \mathcal{S}_{t-1}\,.
\end{equation} 
Indeed, note that for all $ x \in V,$ we have $\Tilde{\pi}_{t-1}(x) \leq \frac{\pi_t(V)}{\pi_{t-1}(V)}   \Tilde{\pi}_{t}(x)$. Then, Equation (\ref{mini2}) gives:
$$
\begin{aligned}
 \mathcal{L}(f\mid \Tilde{\pi}_{t-1}) &\leq  \sum_{x\in V} \Tilde{\pi}_{t-1}(x) \left( f(x) \log\left(\frac{f(x)}{\tilde{\pi}_t(f)}\right)-f(x)+\tilde{\pi}_t(f)\right)  \\ 
 &\leq \frac{\pi_t(V)}{\pi_{t-1}(V)}\sum_{x\in V} \Tilde{\pi}_{t}(x) \left( f(x) \log\left(\frac{f(x)}{\tilde{\pi}_t(f)}\right)-f(x)+\tilde{\pi}_t(f)\right) \\
 &=\frac{\pi_t(V)}{\pi_{t-1}(V)} \mathcal{L}(f\mid \Tilde{\pi}_{t}) 
\end{aligned}
$$

\end{remark}
%%%%%%%%%%%%%%%%%%%%%%%%%%%%%%%%%%%%%HYPER CONTRACTIVITE%%%%%%%%%%%%%%%%%%%%%%%%%%%%%%%%%%%%%%%%%%%%%%%%%%%%%%%%%%%%%%%%%%%%%%%%%%%%%%%%%%%%%%%%%%%%%%%%%%%%%%%%%%%%%%%%%%%%%%%%%%%%%%%%%%%%%%%%%%%%%%%%%%%%%%%%%%%%%%%%%%%%%%%%%%%%%%%%%%%%%%%%%%%%%%%%%%%%%%%%%%%%%%%%%%%%%%%%%%%%%%%%%%%%%%%%%%
\label{Hypercontractity}
To begin with, hypercontractivity adapts to a non-decreasing environment. It takes on a new form. The analogue of Proposition \ref{propoUtilSobolev} is the following:
\begin{theorem}\label{Hypercontractivitythm}
Let $(K_t)_{t \ge 1}$ be a sequence of aperiodic and irreducible Markov transitions operators and $(\pi_t)_{t\ge 1}$ a sequence of positive measures. Assume that the sequence $\{(K_t,\pi_t)\}_{t\ge 1}$ is a finite non-decreasing environment. Let $\alpha_t$ be the logarithmic Sobolev constants $\alpha(K_t^*K_t,\Tilde{\pi}_t)$. For all $q\geq 2$ and $t\geq 1$, define  $q_t$ by:
$$ q_t= q \prod_{s=1}^t(1+\alpha_s) \,.$$\\
Then, for all $q\geq 2$, for all $t\geq 1$,
\begin{equation}\label{HP}
\forall p \in [1,q_t]\,,\,\forall f \in \ell^{q}(\pi_t)\,,\quad \left\|K_{0,t}f\right\|_{\ell^{p}(\Tilde{\pi}_1)}\le \left\|f\right\|_{\ell^{q}(\Tilde{\pi}_t)}\left(\frac{\pi_t(V)}{\pi_1(V)}\right)^{\frac{1}{q}}\,.
\end{equation}
\end{theorem}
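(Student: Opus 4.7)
My strategy is to reduce the inequality to the endpoint case $p=q_t$ and then prove that case by a backward induction on $s=t,t-1,\dots,1$ combining Proposition~\ref{propoUtilSobolev} with the monotonicity of the environment. I adopt the convention $\pi_0:=\pi_1$ as in the proof of Theorem~\ref{Poincarthm}.

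Introduce the decreasing sequence of exponents $r_s:=q\prod_{u=s+1}^t(1+\alpha_u)$, so that $r_t=q$, $r_0=q_t$, and $r_{s-1}=(1+\alpha_s)r_s$. Since $q\ge 2$, each $r_s\ge 2$, hence Proposition~\ref{propoUtilSobolev} is applicable to $K_s$ with the exponent pair $(r_s,r_{s-1})$. The core step is the one-step inequality
\[
\|K_{s-1,t}f\|_{\ell^{r_{s-1}}(\pi_{s-1})}\;\le\;\|K_{s,t}f\|_{\ell^{r_s}(\pi_s)},\qquad s=1,\dots,t.
\]
To prove it, set $g:=K_{s,t}f$ and chain three elementary estimates: (i) because $\pi_{s-1}\le\pi_s$, one has $\|K_sg\|_{\ell^{r_{s-1}}(\pi_{s-1})}\le\|K_sg\|_{\ell^{r_{s-1}}(\pi_s)}$; (ii) rescaling to $\tilde\pi_s$ and invoking hypercontractivity of $K_s$ (Proposition~\ref{propoUtilSobolev}, which applies because $r_{s-1}=(1+\alpha_s)r_s$) gives
\[
\|K_sg\|_{\ell^{r_{s-1}}(\pi_s)}=\pi_s(V)^{1/r_{s-1}}\|K_sg\|_{\ell^{r_{s-1}}(\tilde\pi_s)}\le \pi_s(V)^{1/r_{s-1}-1/r_s}\|g\|_{\ell^{r_s}(\pi_s)};
\]
(iii) since $r_{s-1}>r_s$ and $\pi_s(V)\ge\pi_1(V)\ge 1$, the factor $\pi_s(V)^{1/r_{s-1}-1/r_s}$ is at most $1$. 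Iterating from $s=t$ down to $s=1$ telescopes to the endpoint bound $\|K_{0,t}f\|_{\ell^{q_t}(\pi_1)}\le\|f\|_{\ell^q(\pi_t)}$.

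To conclude for arbitrary $p\le q_t$, I apply monotonicity of $\ell^p$-norms on the probability space $(V,\tilde\pi_1)$ and then rescale, obtaining
\[
\|K_{0,t}f\|_{\ell^p(\tilde\pi_1)}\le \|K_{0,t}f\|_{\ell^{q_t}(\tilde\pi_1)}=\pi_1(V)^{-1/q_t}\|K_{0,t}f\|_{\ell^{q_t}(\pi_1)}.
\]
Combining with the endpoint bound and using $\|f\|_{\ell^q(\pi_t)}=\pi_t(V)^{1/q}\|f\|_{\ell^q(\tilde\pi_t)}$ produces exactly the constant $\pi_t(V)^{1/q}/\pi_1(V)^{1/q_t}$ announced in \eqref{HP}. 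The main obstacle is the fact that the reference invariant measures vary in time, and hypercontractivity is naturally stated with respect to the probability $\tilde\pi_s$ at time $s$; I circumvent this by running the induction at the level of the \emph{unnormalized} norms $\ell^{r_s}(\pi_s)$, and the assumption $\pi_1(V)\ge 1$ is precisely what is needed so that each rescaling from $\tilde\pi_s$ back to $\pi_s$ contributes a factor $\le 1$ rather than $\ge 1$.
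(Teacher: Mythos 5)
Your proposal is correct and follows essentially the same route as the paper's proof: the same backward induction on the unnormalized norms $\left\|K_{s,t}f\right\|_{\ell^{r_s}(\pi_s)}$ with the telescoping exponents $r_{s-1}=(1+\alpha_s)r_s$, combining measure dominance with Proposition~\ref{propoUtilSobolev}, followed by the same renormalization and $\ell^p$-monotonicity step. If anything, you are more explicit than the paper about where the hypothesis $\pi_1(V)\ge 1$ enters, namely in absorbing the factor $\pi_s(V)^{1/r_{s-1}-1/r_s}\le 1$ when passing between $\tilde\pi_s$ and $\pi_s$ in the hypercontractive step.
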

\begin{proof}[Proof of Theorem \ref{Hypercontractivitythm}]
We impose$\pi_0:=\pi_1$ and fix $t\ge 1$. Define for $s$ in $[[0,t]]$:
$$q_s =q \prod_{u=t-s+1}^{t}(1+\alpha_u) \quad \text{and}\quad n(s)= \left\|K_{t-s,t}f\right\|_{\ell^{q_s}(\pi_{t-s})}\,.$$ 
Note that for all $s \in [[0,t-1]]$:
$$ \begin{aligned}
\forall g \in \ell^{q_{s+1}}(\pi_{t-s-1})\,,\quad \| K_{t-s} g \|_{\ell^{q_{s+1}}(\pi_{t-s-1})} & \leq   \| K_{t-s} g \|_{\ell^{q_{s+1}}(\pi_{t-s})}\quad  \text{using that } \pi_{t-s-1}\leq \pi_{t-s} \\
&\leq \pi_{t-s}(V)^{\frac{1}{q_{s+1}}-\frac{1}{q_s}} \| g\|_{\ell^{q_{s}}(\pi_{t-s})} \quad \text{by Proposition \ref{propoUtilSobolev}}\,.
\end{aligned} 
$$
Then, we find for all $s \in [[0,t-1]]$:
$$
 n(s+1) \leq \pi_{t-s}(V)^{\frac{1}{q_{s+1}}-\frac{1}{q_s}} n(s)\,.
$$
A straightforward induction gives:
$$
n(t)=\left\|K_{0,t}f\right\|_{\ell^{q_t}(\pi_0)}\le \left\|f\right\|_{\ell^{q}(\pi_t)} \prod_{s=1
}^{t}\pi_{s}(V)^{\frac{1}{q_{t-s+1}}-\frac{1}{q_{t-s}}} =n(0) \prod_{s=0}^{t-1}\pi_{t-s}(V)^{\frac{1}{q_{s+1}}-\frac{1}{q_s}}\,.
$$
Note that:
$$
\begin{aligned}
\prod_{s=1
}^{t}\pi_{s}(V)^{\frac{1}{q_{t-s+1}}-\frac{1}{q_{t-s}}} &= \prod_{s=0
}^{t-1}\pi_{s+1}(V)^{\frac{1}{q_{t-s}}} \prod_{s=1
}^{t}\pi_{s}(V)^{-\frac{1}{q_{t-s}}}\\
&=\pi_{1}(V)^{\frac{1}{q_t}} \pi_{t}(V)^{-\frac{1}{q_0}} \prod_{s=1}^{t-1}\left(\frac{\pi_{s+1}(V)}{\pi_{s}(V)}\right)^{\frac{1}{q_{t-s}}}
\end{aligned}
 $$
Renormalize to find:
\begin{equation*}
\forall t\geq 1\,,\quad \left\|K_{0,t}f\right\|_{\ell^{q_t}(\Tilde{\pi}_0)}\le \left\|f\right\|_{\ell^{q}(\Tilde{\pi}_t)} \prod_{s=1}^{t-1}\left(\frac{\pi_{s+1}(V)}{\pi_{s}(V)}\right)^{\frac{1}{q_{t-s}}}\,.
\end{equation*}
Remark that for all $s\geq 1$, the function $x\to \exp\left( x\ln\left(\frac{\pi_{s+1}(V)}{\pi_{s}(V)} \right)\right)$ is non-decreasing, then,
$$
\prod_{s=1}^{t-1}\left(\frac{\pi_{s+1}(V)}{\pi_{s}(V)}\right)^{\frac{1}{q_{t-s}}} \leq \prod_{s=1}^{t-1}\left(\frac{\pi_{s+1}(V)}{\pi_{s}(V)}\right)^{\frac{1}{q}}= \left(\frac{\pi_t(V)}{\pi_1(V)}\right)^{\frac{1}{q}}\,.
$$
Finally, we get:
\begin{equation*}
\forall t\geq 1\,,\quad \left\|K_{0,t}f\right\|_{\ell^{q_t}(\Tilde{\pi}_0)}\leq\left\|f\right\|_{\ell^{q}(\Tilde{\pi}_t)} \left(\frac{\pi_t(V)}{\pi_1(V)}\right)^{\frac{1}{q}}\,,
\end{equation*}
and the announced result is a consequence of the following:
\begin{equation*}
    \forall p \in [1,q_t]\,,\,\forall h :V \rightarrow \mathbb{R}\,,\quad  \left\|h\right\|_{\ell^{p}(\Tilde{\pi}_0)}\leq \left\|h\right\|_{\ell^{q_t}(\Tilde{\pi}_0)}\,.
\end{equation*}
\end{proof}
In the following, we prove Theorem \ref{LogSobolevThm}  and Theorem \ref{LogSobolevThm2}. The proof follows the same outline as the proof of Theorem \ref{PoincaréthmMer} and we will reuse some of its arguments.
\begin{proof}[Proof of Theorem \ref{LogSobolevThm}  ]\label{Proof2}
Let $\pi_0:=\pi_1$ and for all $t\geq 0$, for any $z\in V$, let $\mu_t^z$ be the law at time $t$ of the chain started at $z$ and driven by $(K_{t})_{t\geq1}$.\\
For $s\geq 1$ and $t\geq s$, let $m_{s,t}$ be $K_{s,t}^{\rightarrow}1$ and for $z\in V$, let $h_t^z$ denote $\frac{\mu_t^z}{\Tilde{\pi}_t}$.\\ 
For $s\geq 1$ and $q_s\ge 2$, we denote $q_s'$ as the Hölder conjugate of $q_s$. For $z\in V$, define $t_z$ by:
\begin{equation}
    s_z=\min \left\{ t \ge 1 \mid  \sum_{u=1}^t \log(1+\alpha_u ) \geq \log\left(\log\left(\Tilde{\pi}_0(z)^{-1}\right)\right)\right\}\,.
\end{equation} 
An application of Proposition \ref{Lien} gives:
$$
\begin{aligned}
	\forall t\ge s\ge 0\,,\,\forall z\in V\,,\quad \left\| h_{t}^z-m_{s,t}\right\|_{\ell^2(\Tilde{\pi}_{t})} &= \left\|O_{s,t}^{\rightarrow}h_s^z\right\|_{\ell^{2}\left(\Tilde{\pi}_{t}\right)}\\
	&\leq\left\|O_{s,t}^{\rightarrow}\right\|_{\ell^{2}\left(\Tilde{\pi}_{s}\right) \rightarrow \ell^{2}\left(\Tilde{\pi}_{t}\right)}\left\|K_{0,s}^{\rightarrow}\right\|_{\ell^{q_s'}\left(\Tilde{\pi}_{0}\right) \rightarrow \ell^{2}\left(\Tilde{\pi}_{s}\right)}\left\|h_0^z\right\|_{\ell^{q_s'}\left(\Tilde{\pi}_{0}\right)}\,.
\end{aligned}
$$
Moreover, for $x,y \in V$, if $s \geq s(x,y)= \max\left\{s_x,s_y\right\}$, then,
$$\left\|h_0^x\right\|_{\ell^{q_s'}\left(\Tilde{\pi}_{0}\right)} = \Tilde{\pi}_0(x)^{-1/q_s} \leq e \quad \text{and}\quad \left\|h_0^y\right\|_{\ell^{q_s'}\left(\Tilde{\pi}_{0}\right)} = \Tilde{\pi}_0(y)^{-1/q_s} \leq e \,. $$
Combine the previous inequalities, Theorem \ref{Poincaréthm} and Theorem \ref{Hypercontractivitythm} to find: 
\begin{equation*}
\begin{aligned}
\forall t \ge s(x,y)\,,\quad d_{TV}(\mu_t^x,\mu_t^y)&\leq \frac{1}{2}\left\| h_{t}^x- m_{s(x,y),t}\right\|_{\ell^2(\Tilde{\pi}_{t})} + \frac{1}{2}\left\| h_{t}^y- m_{s(x,y),t}\right\|_{\ell^2(\Tilde{\pi}_{t})}\\
&\leq e\sqrt{\frac{\pi_t(V)}{\pi_0(V)}}\prod_{u=s(x,y)+1}^t \sqrt{\lambda_u} \,.
\end{aligned}
\end{equation*}
\end{proof}
In the second step, we prove Theorem \ref{LogSobolevThm2} but first, we need the following result:
\begin{lemma}\label{controloperator}
Let $u\geq 2$, $\pi_{u-1}$ and $\pi_u$ two positive measures. Let $K_u$ be a Markov transition operator. Assume that $\pi_u$ is an invariant measure of $K_u$. See the notations in Definition \ref{definitionoperator}. Assume that:
$$ \forall x\in V\,,\quad \pi_u(x)\geq \pi_{u-1}(x)\,.$$
We have:
$$
\forall (a,b) \in [1,\infty]\,,\quad 
\left\|K_{u}^{\Rightarrow}\right\|_{\ell^{a}\left(\pi_{u-1}\right) \rightarrow \ell^{b}\left(\pi_{u}\right)} \leq \left\|K_{u}^*\right\|_{\ell^{a}\left(\pi_{u}\right) \rightarrow \ell^{b}\left(\pi_{u}\right)}\,.
$$
\end{lemma}
\begin{proof}
Denote $a'$ and $b'$ the Hölder conjugate exponents of $a$ and $b$. We get:
$$
\begin{aligned}
\left\|K_{u}^{\Rightarrow}\right\|_{\ell^{a}\left(\pi_{u-1}\right) \rightarrow \ell^{b}\left(\pi_{u}\right)} &= \left\|K_{u}\right\|_{\ell^{b'}\left(\pi_{u}\right) \rightarrow \ell^{a'}\left(\pi_{u-1}\right)}\\
&\leq \left\|K_{u}\right\|_{\ell^{b'}\left(\pi_{u}\right) \rightarrow \ell^{a'}\left(\pi_{u}\right)} \quad \text{using that} \quad \pi_{u-1}\leq \pi_{u}\,,\\
&=\left\|K_{u}^*\right\|_{\ell^{a}\left(\pi_{u}\right) \rightarrow \ell^{b}\left(\pi_{u}\right)}\,.
\end{aligned}
$$    
\end{proof}

\begin{proof}[Proof of Theorem  \ref{LogSobolevThm2}  ]
We recall that $V$ is finite and:
\begin{equation*}
    \forall t \ge 1\,,\quad  \Tilde{\pi}_t^{\sharp} \ge \rho>0 \quad \text{and}\quad \alpha_t \ge \alpha >0  \,.
\end{equation*}
Write \( t \) in the form \( 2r + u \).
Note that:
$$\forall t\geq 0\,,\quad \max\left\{\left|\frac{\mu_t^x (y) -\tilde{\pi}_1K_{0,t}(y)}{\Tilde{\pi}_t(y)}\right| \mid (x, y) \in V\right\}=\left\|K_{0, t}-\Tilde{\pi}_1K_{0,t}\right\|_{\ell^1\left(\Tilde{\pi}_t\right)}=\left\|O_{0,t}\right\|_{\ell^1\left(\Tilde{\pi}_{t}\right) \rightarrow \ell^{\infty}\left(\Tilde{\pi}_{0}\right)}\,.
$$
And, we have with $s=r+u$:
\begin{equation*}
\begin{aligned}
	&\left\|K_{0, t}-\Tilde{\pi}_1K_{0,t}\right\|_{\ell^1\left(\Tilde{\pi}_t\right) \rightarrow \ell^{\infty}\left(\Tilde{\pi}_1\right)}= \left\|O_{0,t}^{\rightarrow}\right\|_{\ell^1\left(\Tilde{\pi}_1\right) \rightarrow \ell^{\infty}\left(\Tilde{\pi}_t\right)}\\
    &\leq\left\|O_{s,t}^{\rightarrow}\right\|_{\ell^{2}\left(\Tilde{\pi}_{s}\right) \rightarrow \ell^{\infty}\left(\Tilde{\pi}_{t}\right)}\times\left\|O_{r,s}^{\rightarrow}\right\|_{\ell^{2}\left(\Tilde{\pi}_{r}\right) \rightarrow \ell^{2}\left(\Tilde{\pi}_{s}\right)}\times\left\|O_{0,r}^{\rightarrow}\right\|_{\ell^{1}\left(\Tilde{\pi}_{1}\right) \rightarrow \ell^{2}\left(\Tilde{\pi}_{r}\right)}\,.
\end{aligned}    
\end{equation*}
We bound these different quantities. First, recall that Theorem \ref{Poincaréthm} states:
$$\left\|O_{r,s}^{\rightarrow}\right\|_{\ell^{2}\left(\Tilde{\pi}_{r}\right) \rightarrow \ell^{2}\left(\Tilde{\pi}_{s}\right)} \le \sqrt{\frac{\pi_s(V)}{\pi_r(V)}\prod_{u=r+1}^s\lambda_u}\,.$$
It remains to bound the two quantities:  $\left\|O_{s,t}^{\rightarrow}\right\|_{\ell^{2}\left(\Tilde{\pi}_{s}\right) \rightarrow \ell^{\infty}\left(\Tilde{\pi}_{t}\right)}$ and $\left\|O_{0,r}^{\rightarrow}\right\|_{\ell^{1}\left(\Tilde{\pi}_{1}\right) \rightarrow \ell^{2}\left(\Tilde{\pi}_{r}\right)}$.\\
The triangle inequality gives:
\begin{align*}
\left\|O_{0,r}^{\rightarrow}\right\|_{\ell^{1}\left(\Tilde{\pi}_{1}\right) \rightarrow \ell^{2}\left(\Tilde{\pi}_{r}\right)} 
&= \left\|K_{0, r}-\Tilde{\pi}_1 K_{0,r}\right\|_{\ell^2\left(\Tilde{\pi}_r\right) \rightarrow \ell^{\infty}\left(\Tilde{\pi}_1\right)} \\
&\leq \left\|K_{0, r}\right\|_{\ell^2\left(\Tilde{\pi}_r\right) \rightarrow \ell^{\infty}\left(\Tilde{\pi}_1\right)} 
   + \left\|\Tilde{\pi}_1 K_{0,r}\right\|_{\ell^2\left(\Tilde{\pi}_r\right) \rightarrow \ell^{\infty}\left(\Tilde{\pi}_1\right)} \\
&\leq 2 \left\|K_{0, r}\right\|_{\ell^2\left(\Tilde{\pi}_r\right) \rightarrow \ell^{\infty}\left(\Tilde{\pi}_1\right)}.
\end{align*}
Then, note that Theorem \ref{Hypercontractivitythm} gives with   $q_r = 2 \prod_{j=1}^r(1+\alpha_j)$:
$$
\begin{aligned}
	 \left\|K_{0,r}^{\rightarrow}\right\|_{\ell^{1}\left(\Tilde{\pi}_{1}\right) \rightarrow \ell^{2}\left(\Tilde{\pi}_{r}\right)}
	\leq& \left\|K_{0,r}\right\|_{\ell^{2}\left(\Tilde{\pi}_{r}\right) \rightarrow \ell^{q_r}\left(\Tilde{\pi}_{1}\right)}
	\left\|I\right\|_{\ell^{q_r}\left(\Tilde{\pi}_{1}\right)\rightarrow \ell^{\infty}\left(\Tilde{\pi}_{1}\right)}\\
	\leq &\sqrt{\frac{\pi_r(V)}{\pi_1(V)}}        \frac{1}{\Tilde{\pi}_1^{\sharp\frac{1}{q_r}}}\,.
\end{aligned}
$$
The last inequality holds by the following:
\begin{equation}\label{derniertrick}
    \forall f \in \ell^{\infty}(\Tilde{\pi}_1)\,,\quad\left\|f\right\|_{\ell^{\infty}(\Tilde{\pi}_1)}\le \frac{1}{\Tilde{\pi}_1^{\sharp\frac{1}{q_r}}} \left\|f\right\|_{\ell^{q_r}(\Tilde{\pi}_1)}\,.
\end{equation}
Choose $r\geq 1$ such that: 
$$2 (1+\alpha)^r \ge \log(1/\rho) \,.  $$
Then, we use the lower bound on the $\Tilde{\pi}_t^{\sharp}$'s and $q_r \geq  2(1+\alpha)^r $ to find:
$$\forall t \ge 1\,,\quad  \frac{1}{\Tilde{\pi}_t^{\sharp\frac{1}{q_{r}}}}\le e \,.  $$
Thus, we get:
$$
 \left\|K_{0,r}^{\rightarrow}\right\|_{\ell^{1}\left(\Tilde{\pi}_{1}\right) \rightarrow \ell^{2}\left(\Tilde{\pi}_{r}\right)}
	\leq e\sqrt{\frac{\pi_r(V)}{\pi_1(V)}}   \,.
$$
We bound $\left\|O_{s,t}^{\rightarrow}\right\|_{\ell^{2}\left(\Tilde{\pi}_{s}\right) \rightarrow \ell^{\infty}\left(\Tilde{\pi}_{t}\right)}$. We have:
$$
\begin{aligned}
	\left\|O_{s,t}^{\rightarrow}\right\|_{\ell^{2}\left(\Tilde{\pi}_{s}\right) \rightarrow \ell^{\infty}\left(\Tilde{\pi}_{t}\right)} &= \left\|O_{s,t}\right\|_{\ell^{1}\left(\Tilde{\pi}_{t}\right) \rightarrow \ell^{2}\left(\Tilde{\pi}_{s}\right)} =  \sup  \left\{ \left\|O_{s,t}f\right\|_{\ell^{2}\left(\Tilde{\pi}_{s}\right)} \mid f \in \ell^{1}\left(\Tilde{\pi}_{t}\right)\,,\, \left\|f\right\|_{\ell^{1}\left(\Tilde{\pi}_{t}\right)}=1  \right\} \\
	&\leq  \sup  \left\{ \left\|K_{s,t}f\right\|_{\ell^{2}\left(\Tilde{\pi}_{s}\right)} \mid  f \in \ell^1(\tilde{\pi}_t)\,,\,\left\|f\right\|_{\ell^{1}\left(\Tilde{\pi}_{t}\right)}=1  \right\} \quad \text{by minimality of variance}\,, \\
	&= \left\|K_{s,t}\right\|_{\ell^{1}\left(\Tilde{\pi}_{t}\right) \rightarrow \ell^{2}\left(\Tilde{\pi}_{s}\right)}\,.
\end{aligned}
$$
Let $\hat{q}(s,t)=2\prod_{u=s+1}^t(1+\hat{\alpha}_u)$. We have by Inequality \ref{derniertrick} with $\Tilde{\pi}_t$ instead of $\Tilde{\pi}_1$:
$$
\begin{aligned}
\left\|K_{s,t}\right\|_{\ell^{1}\left(\Tilde{\pi}_{t}\right) \rightarrow \ell^{2}\left(\Tilde{\pi}_{s}\right)} &\leq \left\|K_{s,t}^{\rightarrow}\right\|_{\ell^{2}\left(\Tilde{\pi}_{s}\right) \rightarrow \ell^{\hat{q}(s,t)}\left(\Tilde{\pi}_{t}\right)} \left\|I\right\|_{\ell^{\hat{q}(s,t)}\left(\Tilde{\pi}_{t}\right) \rightarrow \ell^{\infty}\left(\Tilde{\pi}_{t}\right)}\\
& \le\frac{1}{\Tilde{\pi}_t^{\sharp\frac{1}{\hat{q}(s,t)}}}  \left\|K_{s,t}^{\rightarrow}\right\|_{\ell^{2}\left(\Tilde{\pi}_{s}\right) \rightarrow \ell^{\hat{q}(s,t)}\left(\Tilde{\pi}_{t}\right)}\,.
\end{aligned}
$$
It remains to bound $\left\|K_{s,t}^{\rightarrow}\right\|_{\ell^{2}\left(\Tilde{\pi}_{s}\right) \rightarrow \ell^{\hat{q}(s,t)}\left(\Tilde{\pi}_{t}\right)}$, we define a sequence $(a_k)_{k\in [[s+1,t+1]]}$ by:
$$a_{s+1} =2  \quad \text{and}\quad \forall u \in [[s+2,t]]\,,\quad a_{u+1}=a_u(1+\hat{\alpha}_u) \,.$$
We have:
$$\begin{aligned}
\left\|K_{s,t}^{\Rightarrow}\right\|_{\ell^{2}\left(\pi_{s}\right) \rightarrow \ell^{\hat{q}(s,t)}\left(\pi_{t}\right)} &\leq \prod_{u=s+1}^{t} \left\|K_{u}^{\Rightarrow}\right\|_{\ell^{a_{u}}\left(\pi_{u-1}\right) \rightarrow \ell^{a_{u+1}}\left(\pi_{u}\right)} \\
&\leq\prod_{u=s+1}^{t} \left\|K_{u}^{*}\right\|_{\ell^{a_{u}}\left(\pi_{u}\right) \rightarrow \ell^{a_{u+1}}\left(\pi_{u}\right)} \quad \text{by Lemma \ref{controloperator}}\,,\\
&\leq \prod_{u=s+1}^{t} \pi_{u}(V)^{\frac{1}{a_{u+1}} -\frac{1}{a_{u}}} \quad \text{by Proposition \ref{propoUtilSobolev}}\,.\\
&= \pi_{s+1}(V)^{\frac{1}{a_{s+2}}}\pi_t(V)^{-\frac{1}{a_t}} \prod_{u=s+2}^{t-1} \left(\frac{\pi_{u-1}(V)}{\pi_{u}(V)}\right)^{\frac{1}{a_{u}}}\\
&\leq \pi_{t}(V)^{\frac{1}{a_{t+1}}}\pi_{s+1}(V)^{-\frac{1}{a_{s+2}}} \quad \text{using that } \pi_{u-1}(V)\leq \pi_u(V)\,.
\end{aligned}
$$
Renormalize to find:
\begin{equation*}
    \left\|K_{s,t}^{\rightarrow}\right\|_{\ell^{2}\left(\Tilde{\pi}_{s}\right) \rightarrow \ell^{\hat{q}(s,t)}\left(\Tilde{\pi}_{t}\right)} \leq 1\,,
\end{equation*}
and, using $\hat{q}(s,t) \geq 2(1+\alpha)^r $, we get:
\begin{equation*}
\left\|O_{s,t}^{\rightarrow}\right\|_{\ell^{2}\left(\Tilde{\pi}_{s}\right) \rightarrow \ell^{\infty}\left(\Tilde{\pi}_{t}\right)} \leq e \,.    
\end{equation*}
All the inequalities above merged give:
$$ \max\left\{\left|\frac{\mu_t^x (y) -\tilde{\pi}_1K_{0,t}(y)}{\Tilde{\pi}_t(y)}\right| \mid (x, y) \in V\right\}\le 2e^2\sqrt{\frac{\pi_s(V)}{\pi_1(V)} \prod_{l=r+1}^{r+u} \lambda_{r}}\,. $$
The triangular inequality gives:
$$ 	\max \left\{ s_{\infty}(\mu_t^x,\mu_t^y \mid \Tilde{\pi}_t) \mid (x,y)\in V\right\} \leq  2 \max\left\{\left|\frac{\mu_t^x (y) -\tilde{\pi}_1K_{0,t}(y)}{\Tilde{\pi}_t(y)}\right| \mid (x, y) \in V\right\} \,.$$
And, for all $\eta \in (0,1)$, we get a bound on $ T_{\text{mer}}^{\infty}( \eta \,, (\pi_t)_{t\geq 1} )$.
\end{proof}

\section*{Aknowledgement}
I would first like to thank Pierre Mathieu, my thesis supervisor, without whom this work could not have been initiated or successfully completed, and who also guided me throughout this endeavor. I express gratitude to Laurent Saloff-Coste and Laurent Miclo for their insights on this work. Finally, I extend my thanks to the Alea i2M team members for their support and guidance.

I would also like to thank the referees of the Electronic Journal of Probability, where this article appeared, for their careful reading and helpful comments, which helped improve this paper.

\newpage
\def\refname{REFERENCES}

\end{document}